\newtheorem{thm}{Theorem}[section]
\newtheorem{lem}[thm]{Lemma}
\newtheorem{cor}[thm]{Corollary}
\newtheorem{prop}[thm]{Proposition}
\theoremstyle{definition}
\newtheorem{rem}[thm]{Remark}
\newtheorem{defn}[thm]{Definition}
\newtheorem{ex}[thm]{Example}
\newcommand{\adj}[4]{#1\negmedspace: #2\rightleftarrows #3:\negmedspace #4}
\def\F{{\mathbb F}}
\def\Q{{\mathbb Q}}
\def\Z{{\mathbb Z}}
\def\O{{\mathcal O}}
\def\L{{\mathbb L}}
\def\Hom{\mathop{\mathrm{Hom}}\nolimits}
\def\Ker{\mathop{\mathrm{Ker}}\nolimits}
\def\id{\mathop{\mathrm{id}}\nolimits}
\def\Spec{\mathop{\rm Spec}}
\def\p{\mathfrak{p}}
\def\Catinfty{\mathop{\mathrm{Cat}_{\infty}}\nolimits}
\def\Perf{\mathop{\mathrm{Perf}}\nolimits}
\def\Mod{\mathop{\mathrm{Mod}}\nolimits}
\def\Ring{\mathop{\mathrm{Ring}}\nolimits}
\def\E{\mathop{\mathbb{E}}\nolimits}
\def\CAlg{\mathop{\mathrm{CAlg(Sp)}}\nolimits}
\def\Map{\mathop{\mathrm{Map}}\nolimits}
\def\colim{\mathop{\mathrm{colim}}\nolimits}
\def\arc{\mathop{\mathrm{arc}}\nolimits}
\def\DD{\mathop{\mathrm{DD}}\nolimits}
\def\Vect{\mathop{\mathrm{Vect}}\nolimits}
\numberwithin{equation}{section}
\begin{document}

\title[$p$-complete arc-descent for perfect complexes]{$p$-complete arc-descent for perfect complexes over integral perfectoid rings}

\author{Kazuhiro Ito}
\address{Kavli Institute for the Physics and Mathematics of the Universe (WPI), The University of Tokyo,
5-1-5 Kashiwanoha, Kashiwa, Chiba, 277-8583, Japan}
\email{kazuhiro.ito@ipmu.jp}


\subjclass[2020]{Primary 14G45; Secondary 14F20, 14L05}
\keywords{Perfectoid ring, $\arc$-topology, Perfect complex, $p$-divisible group}


\maketitle

\begin{abstract}
We prove $p$-complete arc-descent results for finite projective modules and perfect complexes over integral perfectoid rings.
Using our results,
we clarify a reduction argument in the proof of the classification of $p$-divisible groups over integral perfectoid rings given by Scholze--Weinstein.
\end{abstract}

\section{Introduction} \label{Section:Introduction}

We fix a prime number $p >0$.
In this paper, we will prove $p$-complete $\arc$-descent results for finite projective modules and perfect complexes over
($p$-complete integral) perfectoid rings.

Before stating our main results, we first recall the following descent result due to Bhatt--Scholze \cite[Theorem 11.2]{BS} for the $v$-topology and Bhatt--Mathew \cite[Theorem 5.16]{BM} for the $\arc$-topology.
Following their work, we will use the $\infty$-categorical language in this paper.
Let $\Catinfty$ denote the $\infty$-category of (small) $\infty$-categories.
For a commutative ring $A$, the $\infty$-category of perfect complexes over $A$ is denoted by $\Perf(A)$.
Recall that, for a homomorphism $A \to B$ of commutative rings, we have a base change functor
$\Perf(A) \to \Perf(B)$, $K \mapsto K \otimes^\L_A B$.

\begin{thm}[{Bhatt--Scholze, Bhatt--Mathew}]\label{Theorem:descent for pefect rings}
The functor
$A \mapsto \Perf(A)$
from the category of perfect $\F_p$-algebras to $\Catinfty$ satisfies $\arc$-hyperdescent.
\end{thm}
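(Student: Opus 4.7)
The plan is to reduce $\arc$-hyperdescent to the conjunction of $v$-hyperdescent and a Milnor-type pullback condition, following the arc-descent criterion of Bhatt--Mathew \cite[\S 5]{BM}. The relevant elementary Milnor squares arise from a perfect valuation ring $V$ of characteristic $p$ together with a prime $\mathfrak{p} \subset V$: one must verify that the square
\[
\begin{CD}
V @>>> V/\mathfrak{p} \\
@VVV @VVV \\
V_{\mathfrak{p}} @>>> \kappa(\mathfrak{p})
\end{CD}
\]
is carried by $\Perf(-)$ to a pullback square in $\Catinfty$. Given this, the theorem decomposes cleanly into two independent steps.

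The first step, $v$-hyperdescent, is essentially \cite[Theorem 11.2]{BS}. One begins from classical faithfully flat descent of perfect complexes, which produces an fpqc sheaf structure on $\Perf(-)$; descent upgrades to hyperdescent because perfect complexes have uniformly bounded Tor amplitude, which allows one to truncate the cosimplicial totalization of any Čech nerve at a finite stage. On the site of perfect $\F_p$-algebras, every $v$-cover is refined, after perfection, by an fpqc cover; since $\Perf$ is insensitive to perfection on perfect rings, the $v$-Čech complex agrees with an fpqc Čech complex, and $v$-hyperdescent follows.

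The second step, the Milnor pullback condition, is the main obstacle. At the level of rings the square above is already a derived pullback: $V \to V_\mathfrak{p}$ is flat and $\Tor^V_i(V_\mathfrak{p}, V/\mathfrak{p}) = 0$ for $i > 0$, since valuation rings have linearly ordered prime spectra. Tensoring with any $K \in \Perf(V)$ shows that $K \to (K \otimes^{\L}_V V/\mathfrak{p}) \times_{K \otimes^{\L}_V \kappa(\mathfrak{p})} (K \otimes^{\L}_V V_\mathfrak{p})$ is an equivalence, which gives the fully faithful direction of the comparison $\Perf(V) \to \Perf(V/\mathfrak{p}) \times_{\Perf(\kappa(\mathfrak{p}))} \Perf(V_\mathfrak{p})$, both on objects and on mapping spectra. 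For essential surjectivity, a compatible pair $(K_1,K_2)$ over $(V/\mathfrak{p},V_\mathfrak{p})$ is assembled as the homotopy pullback over $\kappa(\mathfrak{p})$; the resulting complex is perfect over $V$ because every perfect complex over a valuation ring is quasi-isomorphic to a bounded complex of finite free modules, so the gluing reduces to classical Milnor patching for finitely generated projective modules term by term, with Tor amplitude controlled by that of $K_1$ and $K_2$. The hardest point is this $\infty$-categorical upgrade of Milnor patching, which one packages most cleanly by verifying the pullback property of mapping spectra first and then deducing essential surjectivity via the computation of $\End$-algebras. With both steps in place, the Bhatt--Mathew criterion yields $\arc$-hyperdescent.
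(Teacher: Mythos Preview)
The paper does not give its own proof of this statement; it is simply recalled as a known result of Bhatt--Scholze \cite[Theorem 11.2]{BS} and Bhatt--Mathew \cite[Theorem 5.16]{BM}. The closest the paper comes is the first proof of the analogous Theorem~\ref{Theorem:descent for perfect complex for derived quotients}, which reruns the Bhatt--Mathew argument with a derived-quotient decoration. Your overall architecture---restrict to $\Perf_{[a,b]}$ so that the target is an $n$-category, then verify $v$-descent and aic-$v$-excision and invoke \cite[Proposition 4.8]{BM}---matches that argument exactly.

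Where your proposal departs is in the execution of the two verifications, and in both places the departure leaves a gap. For $v$-descent, the claim that every $v$-cover of perfect $\F_p$-algebras is ``refined, after perfection, by an fpqc cover'' is not the argument in \cite{BS} and is not obviously true; were it so, \cite[Theorem 11.2]{BS} would be a formality. The actual proof (reproduced in the paper's treatment of Theorem~\ref{Theorem:descent for perfect complex for derived quotients}) writes a $v$-cover as a filtered colimit of $h$-covers, invokes the substantial theorem \cite[Theorem 11.27]{BS} that $h$-covers of perfect rings are \emph{descendable} in Mathew's sense, applies \cite[Proposition 3.22]{Mathew16}, and only then uses the Tor-amplitude bound to pass to the colimit via \cite[Lemma 3.7]{BM}. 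For aic-$v$-excision, your full-faithfulness argument via the derived pullback square is fine, but the essential surjectivity step---``term by term Milnor patching''---does not go through: the bounded free resolutions of $K_1$ over $V/\mathfrak{p}$ and of $K_2$ over $V_{\mathfrak{p}}$ have no reason to have matching ranks in each degree, and the comparison over $\kappa(\mathfrak{p})$ is only a quasi-isomorphism, not a termwise one. The argument in \cite{BM} and in the paper instead notes that $V \to V_{\mathfrak{p}} \times V/\mathfrak{p}$ is itself descendable, recovers $\Perf(V)$ as the totalization of its \v{C}ech nerve, and then identifies that totalization with the desired pullback by computing the iterated tensor products directly.
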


The main result of this paper is the following analogous statement for perfectoid rings (in the sense of \cite[Definition 3.5]{BMS}).
Here we use the $\varpi$-complete $\arc$-topology introduced in \cite[Section 2.2.1]{CS}.

\begin{thm}\label{Theorem:descent for perfectoid}
Let $R$ be a perfectoid ring and $\varpi \in R$ an element with $p \in (\varpi^p)$ such that $R$ is $\varpi$-complete.
Then the functors
\[
A \mapsto \Perf(A^\flat), \quad A \mapsto \Perf(W(A^\flat)), \quad and \quad A \mapsto \Perf(A)
\]
from the category of $\varpi$-complete perfectoid $R$-algebras to $\Catinfty$ satisfy $\varpi$-complete $\arc$-hyperdescent.
Here $A^\flat:=\varprojlim_{x \mapsto x^p} A/pA$ is the tilt of $A$ and $W(A^\flat)$ is the ring of Witt vectors of $A^\flat$.
\end{thm}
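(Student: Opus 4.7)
The strategy is to deduce all three statements from Theorem \ref{Theorem:descent for pefect rings} by propagating hyperdescent along the natural chain of rings
\[
A^\flat = W(A^\flat)/p, \qquad W(A^\flat), \qquad A = W(A^\flat)/\xi,
\]
where $\xi \in W(R^\flat)$ is a fixed generator of $\ker\theta_R$; its image in $W(A^\flat)$ is a non-zero-divisor generating $\ker\theta_A$ for every perfectoid $R$-algebra $A$. The tilting, Witt, and $\theta$ constructions are functorial in $A$, so each step can be carried out simultaneously on an entire cosimplicial $\varpi$-complete $\arc$-hypercover.

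For the first functor $A \mapsto \Perf(A^\flat)$, tilting sends $\varpi$-complete perfectoid $R$-algebras to $\varpi^\flat$-complete perfect $\F_p$-algebras and carries $\varpi$-complete $\arc$-hypercovers to $\varpi^\flat$-complete $\arc$-hypercovers, essentially by the definition of the topology in \cite{CS}. Theorem \ref{Theorem:descent for pefect rings} asserts hyperdescent for $\Perf$ over the whole category of perfect $\F_p$-algebras, so restriction to the $\varpi^\flat$-complete subcategory and to $\varpi^\flat$-complete hypercovers yields what we need.

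For the second functor $A \mapsto \Perf(W(A^\flat))$, I would exploit that $W(A^\flat)$ is $p$-torsion-free and derived $p$-complete, so that $\Perf(W(A^\flat)) \simeq \lim_n \Perf(W_n(A^\flat))$ with $W_n(A^\flat) := W(A^\flat)/p^n$. Swapping this limit with the cosimplicial descent limit reduces the claim to descent for $A \mapsto \Perf(W_n(A^\flat))$ for each $n$, which I would obtain by induction on $n$. The base case $n=1$ is the first functor, while the inductive step is handled via the square-zero thickening $W_n(A^\flat) \twoheadrightarrow W_{n-1}(A^\flat)$, whose kernel is isomorphic to $A^\flat$ as a $W_{n-1}(A^\flat)$-module via iterated Verschiebung; standard deformation theory then controls $\Perf(W_n)$ in terms of $\Perf(W_{n-1})$ and $A^\flat$-module data, both of which satisfy descent by the inductive hypothesis.

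Finally, the third functor should follow from the second using $A = W(A^\flat)/\xi$ with $\xi$ a global non-zero-divisor: perfect complexes over $A$ correspond to perfect complexes over $W(A^\flat)$ together with appropriate Koszul data trivializing multiplication by $\xi$, and descent of that data is provided by the second functor. The main obstacle of the plan lies precisely in this last transfer: the quotient $W(A^\flat) \to A$ is not faithfully flat, so descent does not propagate formally, and one must argue, for example, via a Beauville--Laszlo-style fiber-square description $\Perf(A) \simeq \Perf(W(A^\flat)) \times_{\Perf(W(A^\flat)[\tfrac{1}{\xi}])}\Perf(W(A^\flat)[\tfrac{1}{\xi}]^{\wedge})$ and verify that each piece satisfies $\varpi$-complete $\arc$-hyperdescent. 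An appealing alternative, which sidesteps these subtleties, is to verify Bhatt--Mathew's $\arc$-excision criterion directly for $\Perf(A)$: $v$-descent is classical (see \cite{BS}), and the remaining Milnor-square excision statements can be reduced modulo $p$ to the perfect-$\F_p$ case and then lifted by $p$-adic completeness of $W(A^\flat)$.
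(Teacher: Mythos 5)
Your plan for the first functor $A\mapsto\Perf(A^\flat)$ contains the main gap, and it is not a minor technicality. After tilting you are trying to deduce $\varpi^\flat$-complete $\arc$-hyperdescent for $\Perf$ on $\varpi^\flat$-complete perfect rings directly from Theorem~\ref{Theorem:descent for pefect rings} by ``restriction.'' This does not work, for two independent reasons. First, a $\varpi^\flat$-complete $\arc$-cover is not in general an $\arc$-cover: the completed notion only requires lifting against \emph{$\varpi^\flat$-complete} rank-$\leq 1$ valuation rings, so the class of test objects is smaller and hence the class of covers is strictly larger. Second, even when the covering maps happen to be honest $\arc$-covers, the \v{C}ech conerve $B^\bullet$ in $\mathcal{C}_{R^\flat,\varpi^\flat}$ is formed using $\varpi^\flat$-\emph{completed} tensor products, whereas Theorem~\ref{Theorem:descent for pefect rings} concerns the uncompleted conerve; these differ, and so do their $\Perf$'s. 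The paper's route around both problems is to first pass to derived quotients: by Proposition~\ref{Proposition:derived complete and reduction modulo I}~(2) one has $\Perf(A^\flat)\simeq\lim_m\Perf(A^\flat/^\L(\varpi^\flat)^m)$, so it suffices to descend each $A\mapsto\Perf(A^\flat/^\L(\varpi^\flat)^m)$. Modulo $(\varpi^\flat)^m$ the completion in the conerve is harmless, and replacing $B$ by $C:=B\times A[1/\varpi^\flat]$ produces a genuine $\arc$-cover with $C^\bullet/^\L(\varpi^\flat)^m\cong B^\bullet/^\L(\varpi^\flat)^m$. Crucially, this also requires an extension of Theorem~\ref{Theorem:descent for pefect rings} to derived quotients $A/^\L(x_1,\dots,x_r)$ of perfect rings; your proposal never identifies the need for this (the paper proves it separately as Theorem~\ref{Theorem:descent for perfect complex for derived quotients}).

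For the second functor your outline (reduce to $W_n$, induct via $W_n\twoheadrightarrow W_{n-1}$) is workable and is close in spirit to the paper's fiber-sequence induction $\mathcal{G}_{m-1}\to\mathcal{G}_m\to\mathcal{G}_1$, but it inherits the gap above because the base case $n=1$ is exactly the first functor. For the third functor, you correctly flag a difficulty, but the proposed Beauville--Laszlo square is not a valid description of $\Perf(A)$: Beauville--Laszlo gluing describes $\Perf$ of $W(A^\flat)$ (the ring being completed/localized), not $\Perf$ of the quotient $W(A^\flat)/\xi$, and $W(A^\flat)$ is already $\xi$-adically complete so the square as written is degenerate. The paper avoids this obstruction entirely: instead of passing from $W(A^\flat)$ to $A$ via $\xi$, it treats $A$ on the same footing as $W(A^\flat)$ by reducing \emph{modulo $\varpi$ rather than $\xi$}. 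Concretely, it shows $A/^\L\varpi\cong A^\flat/^\L\varpi^\flat$ (Lemma~\ref{Lemma:derived quotient of perfectoid}), reducing the $m=1$ case for both $\mathcal{G}=W(-^\flat)$ (with $\alpha=p$) and $\mathcal{G}=\mathrm{id}$ (with $\alpha=\varpi$) to the same derived-quotient statement on the tilt, and then runs the induction on $m$ uniformly.
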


We will make precise what we mean by ``$\varpi$-complete $\arc$-hyperdescent'' in Definition \ref{Definition:hyperdescent}.
From this theorem,
we can deduce $\varpi$-complete $\arc$-descent results for finite projective modules over perfectoid rings, which can be stated using classical category theory; see Corollary \ref{Corollary:finite projective modules}.

We need an analogue of Theorem \ref{Theorem:descent for pefect rings} for derived quotients of perfect rings to prove Theorem \ref{Theorem:descent for perfectoid}.
We formulate it in terms of $\mathbb{E}_\infty$-rings and their modules; see Theorem \ref{Theorem:descent for perfect complex for derived quotients}.
We will present two proofs of Theorem \ref{Theorem:descent for perfect complex for derived quotients}; it can be proved in the same way as Theorem \ref{Theorem:descent for pefect rings}, and also can be deduced from Theorem \ref{Theorem:descent for pefect rings}.
Even if one is only interested in descent results for finite projective modules (Corollary \ref{Corollary:finite projective modules}), such an analogue will be essential.

\begin{rem}\label{Remark:Henkel's thesis Intro}
A similar statement to Theorem \ref{Theorem:descent for perfectoid} has been conjectured by Henkel in \cite[Conjecture A]{Henkel}.
In fact, the proof of Theorem \ref{Theorem:descent for perfectoid} shows that \cite[Conjecture A]{Henkel} is true; see Remark \ref{Remark:Henkel's thesis} for additional details.
\end{rem}

As an application of Theorem \ref{Theorem:descent for perfectoid} (or Corollary \ref{Corollary:finite projective modules}), we will discuss the following classification result for $p$-divisible groups over perfectoid rings obtained by Lau \cite[Theorem 9.8]{Lau2018} in the case where $p \geq 3$, and by Scholze--Weinstein \cite[Theorem 17.5.2]{Scholze-Weinstein} in general; see Theorem \ref{Theorem:classification over perfectoid ring} for a more precise statement.

\begin{thm}[{Lau, Scholze--Weinstein}]\label{Theorem:Classifition p-divisible group Intro}
Let $A$ be a perfectoid ring.
The category of $p$-divisible groups over $A$
is anti-equivalent to the category of minuscule Breuil--Kisin--Fargues modules for $A$.
\end{thm}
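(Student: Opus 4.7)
The plan is to establish the anti-equivalence by $\varpi$-complete arc-hyperdescent, reducing the general case to a subclass of perfectoid rings over which the classification is already known. The privileged test class consists of (products of) absolutely integrally closed perfectoid valuation rings of rank one, over which Fargues's theorem directly provides the Dieudonn\'e anti-equivalence. For a general perfectoid ring $A$, one fixes a $\varpi$-complete arc-cover $A \to B$ such that $B$ and every $p$-completed term of the Cech nerve $B^{\otimes_A (\bullet+1)}$ is of this test type, and propagates the known equivalence back to $A$ along the cover.

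The target side of the equivalence is handled essentially directly by the main results of this paper. A minuscule Breuil--Kisin--Fargues module for $A$ consists of a finite projective $W(A^\flat)$-module $M$ together with a Frobenius-semilinear isomorphism over $W(A^\flat)[1/\xi, 1/\varphi(\xi)]$ whose discrepancy on $M$ is controlled by a generator $\xi$ of the kernel of $W(A^\flat) \to A$. By Corollary \ref{Corollary:finite projective modules}, finite projective modules over $W(A^\flat)$ satisfy $\varpi$-complete arc-hyperdescent; the Frobenius is preserved under base change, and the minuscule condition is arc-local. Hence the assignment $A \mapsto \{\text{minuscule BKF modules for } A\}$ is a $\varpi$-complete arc-hypersheaf of categories.

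For the source, full faithfulness of $G \mapsto \mathrm{BKF}(G)$ can be verified after an fpqc cover and is essentially a consequence of the Dieudonn\'e-theoretic description of $\Hom(G, H)$. Essential surjectivity is the real difficulty: given $(M, \varphi_M)$ over $A$, one pulls back to $B$, invokes the known anti-equivalence to obtain a $p$-divisible group $G_B$, and transports the (automatic) arc-descent datum on $(M, \varphi_M)$ to a compatible family of $p$-divisible groups on the Cech nerve $\{B^{\otimes_A (n+1)}\}_n$ via full faithfulness on each term.

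The main obstacle, and the whole point of invoking $\varpi$-complete arc-\emph{hyper}descent rather than mere descent, is that the category of $p$-divisible groups is not known to be an arc-stack in full generality, so one cannot just descend $G_B$ to $A$ abstractly. The reduction clarified by this paper handles this by choosing the cover so that at each level of the hypercover the situation is fpqc over a locally very nice base (a product of perfectoid valuation rings), where classical fpqc descent of $p$-divisible groups applies. The arc-hypersheaf property on the BKF side, furnished by Theorem \ref{Theorem:descent for perfectoid} and Corollary \ref{Corollary:finite projective modules}, ensures that the fpqc-descent data produced termwise assemble into a globally coherent datum, yielding a $p$-divisible group over $A$ whose BKF module is $(M, \varphi_M)$. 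This makes rigorous the reduction argument of Scholze--Weinstein whose implicit dependence on arc-hyperdescent was a loose end in the original treatment.
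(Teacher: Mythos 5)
Your skeleton is the same as the paper's: take a $p$-complete arc-hypercover $A \to B^\bullet$ with each $B^n$ a product of perfectoid valuation rings of rank $\leq 1$ with algebraically closed fraction fields, use Corollary~\ref{Corollary:finite projective modules} to descend minuscule Breuil--Kisin--Fargues modules, and invoke the known classification on each term. However, there is a genuine gap on the $p$-divisible group side, precisely at the point you flag as ``the main obstacle.''

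You correctly observe that one cannot descend $G_B$ to $A$ without knowing something about arc-descent for $p$-divisible groups, but your way around it does not work: you claim the hypercover is chosen ``so that at each level of the hypercover the situation is fpqc,'' and then invoke classical fpqc descent of $p$-divisible groups. This is false. Arc-covers and $p$-complete arc-covers are generally far from faithfully flat -- that is the whole point of the arc-topology -- and in particular the arc-hypercover supplied by \cite[Lemma 2.2.3]{CS} is not fpqc. So there is simply no mechanism in your argument to produce a $p$-divisible group over $A$ from the compatible family on the \v{C}ech nerve. The paper's crucial additional input, which your proposal is missing, is that $p$-divisible groups \emph{do} satisfy $\varpi$-complete arc-(hyper)descent along such a hypercover: by \cite[Proposition 1.1]{deJong}, a $p$-divisible group over a $p$-complete ring is a compatible system of finite locally free group schemes, which are finite projective Hopf algebras; and finite projective modules satisfy $\varpi$-complete arc-hyperdescent by Corollary~\ref{Corollary:finite projective modules}. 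This combination is exactly how the paper identifies $p$-divisible groups over $A$ with descent data over $B^\bullet$, and it is the step that makes essential surjectivity go through. A smaller imprecision: you invoke only the $\O_C$ case (Fargues/Scholze--Weinstein), but the hypercover terms are products of perfectoid valuation rings, some of which may be of characteristic $p$; the paper also needs the perfect-ring case of Berthelot, Gabber, and Lau.
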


\begin{rem}\label{Remark:Scholze-Weinstein gap}
The strategy of Scholze--Weinstein is to deduce the general statement from the classification of $p$-divisible groups over perfectoid valuation rings of rank $\leq 1$ with algebraically closed fraction fields, which is proved by Berthelot \cite[Corollaire 3.4.3]{Berthelot} in the equal characteristic case, and by Scholze--Weinstein \cite[Theorem 14.4.1]{Scholze-Weinstein} (based on \cite[Theorem B]{Scholze-Weinstein2013}) in the mixed characteristic case.
In their original proof, however, there seems to be a technical issue in this reduction procedure.\footnote{More precisely, we should replace the $v$-cover $S$ over $R^\flat$ given in the last paragraph of the proof of \cite[Theorem 17.5.2]{Scholze-Weinstein} with its $\xi_0$-completion to conclude that $W(S)/(\xi)$ is a perfectoid ring over $R$ whose tilt is isomorphic to $S$ (see also Proposition \ref{Proposition:tilting and arc cover}). Here $\xi=(\xi_0, \xi_1, \dotsc) \in W(R^\flat)$ is a generator of the kernel of the usual map $\theta \colon W(R^\flat) \to R$.
Accordingly, we need to use Corollary \ref{Corollary:finite projective modules} (or its ``$\varpi$-complete $v$-descent'' analogue, which, to the best of our knowledge, has not been proved before in the literature, either) instead of Theorem \ref{Theorem:descent for pefect rings} or \cite[Theorem 4.1]{BS}. We thank P.\ Scholze for e-mail correspondence on the proof of \cite[Theorem 17.5.2]{Scholze-Weinstein}.}
We will verify it by using Theorem \ref{Theorem:descent for perfectoid} (or Corollary \ref{Corollary:finite projective modules}) in Section \ref{Section:The classification of p-divisible groups over perfectoid rings}.
\end{rem}

This paper is organized as follows.
In Section \ref{Section:Preliminaries}, we recall the definitions and some basic properties of perfectoid rings and $\varpi$-complete $\arc$-covers.
We also collect some results from the theory of $\E_\infty$-rings and their modules, which are used in the proof of Theorem \ref{Theorem:descent for perfectoid}.
In Section \ref{Section:An analogue for derived quotients of perfect rings}, we state and prove an analogue of Theorem \ref{Theorem:descent for pefect rings} for derived quotients of perfect rings (Theorem \ref{Theorem:descent for perfect complex for derived quotients}).
In Section \ref{Section:Proofs of main results}, we prove Theorem \ref{Theorem:descent for perfectoid} and deduce descent results for finite projective modules (Corollary \ref{Corollary:finite projective modules}) from it.
In Section \ref{Section:The classification of p-divisible groups over perfectoid rings}, we give a proof of Theorem \ref{Theorem:Classifition p-divisible group Intro}, following the approach of Scholze--Weinstein.

\section{Preliminaries}\label{Section:Preliminaries}

\subsection{Perfectoid rings and $\varpi$-complete $\arc$-covers}\label{Subsection:Perfectoid rings and varpi-complete arc-covers}

In this subsection, we recall some basic facts about perfectoid rings and $\varpi$-complete $\arc$-covers.
Our basic references are \cite[Section 3]{BMS} and \cite[Section 2]{CS}.

Let us first recall the notion of a $\varpi$-complete $\arc$-cover from \cite[Section 2.2.1]{CS}.
Let $R$ be a commutative ring and $\varpi \in R$ an element.
We say that a homomorphism $A \to B$ of $R$-algebras is a \textit{$\varpi$-complete $\arc$-cover} if, for any homomorphism $A \to V$ with $V$ a $\varpi$-complete valuation ring of rank $\leq 1$, there exist an extension of $V \hookrightarrow W$ of $\varpi$-complete valuation rings of rank $\leq 1$ and a homomorphism $B \to W$ such that the following diagram commutes:
\[
\xymatrix{
A \ar^-{}[r]  \ar[d]_-{} & B \ar[d]_-{}  \\
V \ar[r]^-{} & W.
}
\]
A $0$-complete $\arc$-cover is just an $\arc$-cover in the sense of \cite[Definition 1.2]{BM}.
The reduction modulo $\varpi$ of a $\varpi$-complete $\arc$-cover is an $\arc$-cover.

Let $R$ be a \textit{perfectoid ring} in the sense of \cite[Definition 3.5]{BMS}, i.e.\ $R$ is $\varpi$-complete for some element $\varpi \in R$ with $p \in (\varpi^p)$, the Frobenius map $R/pR \to R/pR$ is surjective, and the kernel of $\theta \colon W(R^\flat) \to R$ is principal.
Here
\[
R^\flat:=\varprojlim_{x \mapsto x^p} R/pR
\]
is the tilt of $R$ and
$\theta \colon W(R^\flat) \to R$ is the unique ring homomorphism whose reduction modulo $p$ is the projection map
$R^\flat \to R/pR$, $(x_0, x_1, \dotsc) \mapsto x_0$.
We note that $R$ is $p$-complete.

Let $\varpi \in R$ be an element as above.
Let $\mathcal{C}_{R, \varpi}$ denote the category of $\varpi$-complete perfectoid $R$-algebras.
Every diagram $B \leftarrow A \rightarrow B'$ in $\mathcal{C}_{R, \varpi}$ has a colimit, which is given by the $\varpi$-completion of $B \otimes_A B'$; see, for instance, \cite[Proposition 2.1.11]{CS}.
Hence we can define, in the usual way, the \v{C}ech conerve $A \to B^{\bullet}$ for a homomorphism $A \to B$ in $\mathcal{C}_{R, \varpi}$, which is an augmented cosimplicial object in $\mathcal{C}_{R, \varpi}$, and the notion of a \textit{$\varpi$-complete $\arc$-hypercover} $A \to B^{\bullet}$ using $\varpi$-complete $\arc$-covers.

In this paper, we will check that some functors on $\mathcal{C}_{R, \varpi}$ (e.g.\ $A \mapsto \Perf(A)$) satisfy $\varpi$-complete $\arc$-(hyper)descent.
For the sake of clarity, let us recall what it means.

\begin{defn}\label{Definition:hyperdescent}
Let $\mathcal{F} \colon \mathcal{C}_{R, \varpi} \to \mathcal{D}$ be a (covariant) functor to an $\infty$-category $\mathcal{D}$.
We say that the functor $\mathcal{F}$ satisfies \textit{$\varpi$-complete $\arc$-descent}
(resp.\ \textit{$\varpi$-complete $\arc$-hyperdescent})
if $\mathcal{F}$ preserves finite products and if for every $\varpi$-complete $\arc$-cover $A \to B$ in $\mathcal{C}_{R, \varpi}$ with \v{C}ech conerve $A \to B^{\bullet}$
(resp.\ every $\varpi$-complete $\arc$-hypercover $A \to B^{\bullet}$)
we have
\[
\mathcal{F}(A) \overset{\sim}{\to} \lim_{\Delta} \mathcal{F}(B^\bullet) \quad \text{in} \quad \mathcal{D},
\]
i.e.\ $\mathcal{F}(A)$ is a limit of the cosimplicial diagram $\mathcal{F}(B^\bullet) \colon \Delta \to \mathcal{D}$.
Here $\Delta$ denotes the simplex category.
(See also \cite[Section A.3.3 and Section A.5.7]{SAG}.)
\end{defn}

\begin{ex}\label{Example:pefect rings case}
\begin{enumerate}
    \item An $\F_p$-algebra is perfectoid if and only if it is perfect; see \cite[Example 3.15]{BMS}. If $R$ is a perfect $\F_p$-algebra, then $\varpi$ can be any element such that $R$ is $\varpi$-complete. If $\varpi=0$, then $\mathcal{C}_{R, 0}$ is just the category of perfect $R$-algebras.
    \item For every perfectoid algebra $R$, there is an element $\varpi \in R$ such that $\varpi^p$ is a unit multiple of $p$; see \cite[Lemma 3.9]{BMS}.
    For such an element $\varpi$, we see that $\mathcal{C}_{R, \varpi}$ is the category of perfectoid $R$-algebras and a $\varpi$-complete $\arc$-cover in $\mathcal{C}_{R, \varpi}$ is simply a $p$-complete $\arc$-cover.
\end{enumerate}
\end{ex}

We set $x^\sharp:=\theta([x])$ for an element $x \in R^\flat$, where $[-]$ denotes the Teichm\"uller lift.
By \cite[Lemma 3.9]{BMS}, there is an element $\varpi^\flat \in R^\flat$ such that $(\varpi^\flat)^\sharp$ is a unit multiple of $\varpi$.
The perfect ring $R^\flat$ is $\varpi^\flat$-complete.

\begin{prop}[{\cite[Proposition 2.1.9]{CS}}]\label{Proposition:tilting and arc cover}
The functor $A \mapsto A^\flat$ induces an equivalence from the category $\mathcal{C}_{R, \varpi}$ of $\varpi$-complete perfectoid $R$-algebras to the category $\mathcal{C}_{R^\flat, \varpi^\flat}$ of $\varpi^\flat$-complete perfect $R^\flat$-algebras.
The inverse functor is given by $B \mapsto W(B)/(\xi)$, where $\xi$ is a generator of the kernel of
$\theta \colon W(R^\flat) \to R$.
Moreover, a map $A \to B$ in $\mathcal{C}_{R, \varpi}$ is a $\varpi$-complete $\arc$-cover if and only if the induced map $A^\flat \to B^\flat$ is a $\varpi^\flat$-complete $\arc$-cover.
\end{prop}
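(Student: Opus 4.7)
The statement splits cleanly into the tilting equivalence of categories and the $\arc$-cover correspondence; both parts are relative versions of Fontaine's classical tilting, and I would attack them in this order.

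For the equivalence $\mathcal{C}_{R,\varpi} \simeq \mathcal{C}_{R^\flat,\varpi^\flat}$, my plan is as follows. Given $A \in \mathcal{C}_{R,\varpi}$, the tilt $A^\flat$ is a perfect $R^\flat$-algebra, and since $\varpi$ agrees with $(\varpi^\flat)^\sharp$ up to a unit, $\varpi$-completeness of $A$ translates into $\varpi^\flat$-completeness of $A^\flat$. Conversely, given $B \in \mathcal{C}_{R^\flat,\varpi^\flat}$, I set $A := W(B)/(\xi)$, viewing $\xi$ in $W(B)$ via $W(R^\flat) \to W(B)$. To see $A$ is a $\varpi$-complete perfectoid $R$-algebra with $A^\flat \cong B$, one checks: $\xi$ is a nonzerodivisor in $W(B)$ (its image $\xi_0 \in B$ divides a power of $\varpi^\flat$ and hence is a nonzerodivisor, since $B$ is perfect and $\varpi^\flat$-torsion-free); $A$ is $\varpi$-complete; the Frobenius on $A/p$ is surjective because $A/p$ is a quotient of the perfect ring $B$; and the kernel of $\theta_A : W(A^\flat) \to A$ is generated by $\xi$. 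The two natural maps $A \to W(A^\flat)/(\xi)$ and $B \to (W(B)/(\xi))^\flat$ are then shown to be isomorphisms by the standard tilting calculation, giving the equivalence.

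For the $\arc$-cover correspondence, the heart of the matter is reducing the lifting criterion to \emph{perfectoid} valuation rings. Any $\varpi$-complete valuation ring $V$ of rank $\leq 1$ receiving a map from a perfectoid $R$-algebra embeds into a perfectoid $\varpi$-complete valuation ring $V'$ of rank $\leq 1$, namely the $\varpi$-adic completion of the integral closure of $V$ in an algebraic closure of $\Frac(V)$; this uses that such a completion is automatically perfectoid with totally ordered value group. Once $\arc$-covers can be tested on perfectoid targets, the category equivalence plus the observation that tilting sends perfectoid $\varpi$-complete valuation rings of rank $\leq 1$ bijectively to perfect $\varpi^\flat$-complete valuation rings of rank $\leq 1$ (and extensions of the former to extensions of the latter) makes the two lifting criteria equivalent: a diagram $A \to V$ with $V$ perfectoid is tilted to $A^\flat \to V^\flat$, the $\varpi^\flat$-complete $\arc$-cover hypothesis for $A^\flat \to B^\flat$ produces an extension $V^\flat \hookrightarrow W^\flat$ receiving $B^\flat$, and untilting through $W(-)/(\xi)$ recovers the required extension over $V$.

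The main obstacle is the reduction to perfectoid valuation rings in the $\arc$-cover part: one has to check both that the embedding $V \hookrightarrow V'$ above genuinely exists in the required generality, and that any extension $V \hookrightarrow W$ of $\varpi$-complete valuation rings can be further enlarged to an extension of \emph{perfectoid} valuation rings, all while keeping track of the map from $B$. With that in hand, the whole proposition is a formal consequence of the tilting equivalence together with the compatibility between $\varpi$-completion on the perfectoid side and $\varpi^\flat$-completion on the tilted side.
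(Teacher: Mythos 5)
Your outline is essentially correct, but the comparison with the paper is lopsided: the paper's own ``proof'' of this proposition is a bare citation to \cite[Proposition~2.1.9]{CS}, together with a pointer to \cite[Lemma~2.2.2]{CS}. So you are not reproducing the paper's argument (there is none to reproduce) but rather reconstructing what lies behind the citation, and your reconstruction does identify the right skeleton. The tilting equivalence $A\mapsto A^\flat$ with quasi-inverse $B\mapsto W(B)/(\xi)$, the compatibility of $\varpi$-completeness with $\varpi^\flat$-completeness, and the reduction of the $\arc$-cover criterion on both sides to \emph{perfectoid} (resp.\ \emph{perfect}) $\varpi$-complete valuation rings of rank $\leq 1$ by passing to the completion of the integral closure in an algebraic closure of the fraction field, are exactly the ingredients that go into the cited result.

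Where your sketch is thinner than the argument actually requires is the step you describe as ``the observation that tilting sends perfectoid $\varpi$-complete valuation rings of rank $\leq 1$ bijectively to perfect $\varpi^\flat$-complete valuation rings of rank $\leq 1$ (and extensions to extensions).'' This is not an observation; it is precisely the content of \cite[Lemma~2.2.2]{CS}, which the paper singles out as an additional citation. The harder direction is showing that if $B$ is a perfect $\varpi^\flat$-complete valuation ring of rank $\leq 1$ then $W(B)/(\xi)$ is again a valuation ring of rank $\leq 1$ (it is clearly perfectoid and $\varpi$-complete, but the valuation-ring property does not come for free), and in the other direction one must also verify that $V\hookrightarrow W$ tilts to an \emph{injection} $V^\flat\hookrightarrow W^\flat$; both require an argument. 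There are similar small items to check in your reduction step: that perfection followed by completion of a rank-$\leq 1$ valuation ring is injective and again rank $\leq 1$, and that the resulting completed integrally closed ring is genuinely perfectoid (the rank-$\leq 1$ hypothesis is what makes the criterion of \cite[Lemma~3.10]{BMS} automatic). None of these invalidate your approach --- they are exactly what one would write down to turn the sketch into a proof --- but they are the nontrivial technical content that the paper deliberately outsources to \cite{CS}, so flagging them as the load-bearing lemmas rather than as observations would make the sketch honest about where the work is.
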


\begin{proof}
This proposition follows from \cite[Proposition 2.1.9]{CS}.
(See also \cite[Lemma 2.2.2]{CS}.)
\end{proof}

\subsection{Preliminaries on $\E_\infty$-rings and their modules}\label{Preliminaries on Einfty-rings and their modules}

In this subsection, we review some terminology and results 
from the theory of $\E_\infty$-rings and their modules.
Our basic references are \cite{HA} and \cite{SAG}.

Let $\CAlg$ denote the $\infty$-category of $\E_\infty$-rings; see \cite[Section 7.1]{HA} for the definition and results which we state below.
We say that an $\E_\infty$-ring $A$ is \textit{connective} (resp.\ \textit{discrete}) if for $n < 0$ (resp.\ for $n \neq 0$) the homotopy group $\pi_n(A)$ is trivial.
Let $\CAlg^\mathrm{cn} \subset \CAlg$ be the full subcategory spanned by the connective $\E_\infty$-rings.
We have a natural fully faithful functor
\[
\Ring \to \CAlg^\mathrm{cn}
\]
from the category $\Ring$ of commutative rings, whose essential image is the full subcategory spanned by the discrete $\E_\infty$-rings.
This functor admits a left adjoint which sends a connective $\E_\infty$-ring $A$ to $\pi_0(A)$.
We will regard a commutative ring as an $\E_\infty$-ring via this functor.

\begin{ex}\label{Example:derived quotient}
An example of an $\E_\infty$-ring is a derived quotient of a commutative ring, that is defined as follows.
We first remark that the $\infty$-category $\CAlg$ admits colimits and limits.
In particular, a diagram $B \leftarrow A \rightarrow B'$ in $\CAlg$ admits a colimit, denoted by
$B \otimes^\L_A B'$.
Let $R$ be a commutative ring and let $x_1, \dotsc, x_r \in R$ be elements.
Then we define the derived quotient of $R$ by elements $x_1, \dotsc, x_r$ as
\[
R/^\L (x_1, \dotsc, x_r) := R \otimes^\L_{\Z[X_1, \dotsc, X_r]} \Z,
\]
where $\Z[X_1, \dotsc, X_r] \to R$ is defined by $X_i \mapsto x_i$ and $\Z[X_1, \dotsc, X_r] \to \Z$ is defined by $X_i \mapsto 0$.
The $\E_\infty$-ring $R/^\L (x_1, \dotsc, x_r)$ is connective.
\end{ex}

For an $\E_\infty$-ring $A$,
let $\Mod(A)$ denote the $\infty$-category of module spectra over $A$; see \cite[Notation 7.1.1.1]{HA} for the definition.
We simply say ``module'' instead of ``module spectrum'' when there is no ambiguity.
The $\infty$-category $\Mod(A)$ is stable and has the structure of a symmetric monoidal $\infty$-category.
Let us write $\otimes^{\L}_{A}$ for the tensor product.

\begin{defn}[{\cite[Definition 7.2.4.1]{HA}}]\label{Definition:perfect modules}
Let
\[
\Perf(A) \subset \Mod(A)
\]
denote the smallest stable subcategory of $\Mod(A)$ which contains $A$ and is closed under retracts.
We say that a module $K \in \Mod(A)$ is \textit{perfect} if $K \in \Perf(A)$.
\end{defn}

\begin{ex}\label{Example:derived infinity category}
Let $A$ be a discrete $\E_\infty$-ring.
By \cite[Theorem 7.1.2.13]{HA}, we have a natural equivalence
\[
\Mod(A) \cong \mathcal{D}(\pi_0(A)).
\]
Here $\mathcal{D}(\pi_0(A))$ is the derived $\infty$-category of $\pi_0(A)$.
Via this equivalence, we may identify $\Perf(A)$ with
the $\infty$-category of perfect complexes over $\pi_0(A)$.
This follows, for instance, from the fact that (for every $\E_\infty$-ring $A$) a module $K \in \Mod(A)$ is perfect if and only if $K$ is a compact object of $\Mod(A)$ in the sense of \cite[Definition 5.3.4.5]{HTT}; see \cite[Proposition 7.2.4.2]{HA}.
(Compare \cite[Tag 07LT]{SP}.)
\end{ex}

For a map $A \to B$ of $\E_\infty$-rings,
we have a forgetful functor
$\Mod(B) \to \Mod(A)$.
This admits a left adjoint
$- \otimes^\L_A B \colon \Mod(A) \to \Mod(B)$, which induces a functor $\Perf(A) \to \Perf(B)$.

We will need the following notion of a perfect module with Tor-amplitude in $[a, b]$.
Here we use homological indexing conventions.

\begin{defn}\label{Definition:Tor-amplitude}
Let $A$ be a connective $\E_\infty$-ring (and therefore we have a natural map $A \to \pi_0(A)$).
Let $K \in \Perf(A)$ and let $a, b$ be integers with $a \leq b$.
We say that $K$ has
\textit{Tor-amplitude in} $[a, b]$
if the base change
$K \otimes^\L_A \pi_0(A)$ has Tor-amplitude in $[a, b]$ (or in other words, for every discrete $\pi_0(A)$-module $M$, we have $\pi_n(K \otimes^\L_A M)=0$ for every $n \notin [a, b]$).
Let
\[
\Perf_{[a, b]}(A) \subset \Perf(A)
\]
be the full subcategory spanned by the perfect modules with Tor-amplitude in $[a, b]$.
\end{defn}

\begin{lem}\label{Lemma:Tor-amplitude}
Let $A$ be a connective $\E_\infty$-ring.
Let $a, b$ be integers with $a \leq b$.
\begin{enumerate}
    \item Assume that there is an integer $r \geq 0$ such that $\pi_m(A)=0$ for $m \geq r$.
    We put $n:=b-a+r$.
    Then the $\infty$-category $\Perf_{[a, b]}(A)$ is
    equivalent to an $n$-category (in the sense of \cite[Definition 2.3.4.1]{HTT}).
    \item Let $A \to B$ be a map of connective $\E_\infty$-rings.
    Assume that the induced map $\Spec \pi_0(B) \to \Spec \pi_0(A)$ is surjective.
    Then a perfect module $K \in \Perf(A)$ has Tor-amplitude in $[a, b]$ if and only if $K\otimes^\L_A B \in \Perf(B)$ has Tor-amplitude in $[a, b]$.
\end{enumerate}
\end{lem}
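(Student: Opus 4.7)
My plan is to handle the two parts independently: part (1) by a duality argument combined with a Postnikov filtration estimate, and part (2) by reducing to the classical residue-field detection of Tor-amplitude over an ordinary ring. For (1), given $K, L \in \Perf_{[a, b]}(A)$, I would use that $\Map_A(K, L) \simeq L \otimes^{\L}_A K^\vee$ (where $K^\vee := \Map_A(K, A)$, using perfectness of $K$) and bound its homotopy by combining three standard facts. First, $K^\vee$ is perfect with Tor-amplitude in $[-b, -a]$, which follows from the compatibility of duality of perfect modules with base change along $A \to \pi_0(A)$ (where the statement is classical). Second, Tor-amplitude is additive under $\otimes^{\L}_A$: if $M$ has Tor-amplitude in $[c_1, d_1]$ and $N$ in $[c_2, d_2]$, then $M \otimes^{\L}_A N$ has Tor-amplitude in $[c_1 + c_2, d_1 + d_2]$ (checked via a Postnikov filtration of one factor). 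Third, for any $A$-module $M$ with Tor-amplitude in $[c, d]$, one has $\pi_s(M) = 0$ for $s \notin [c, d + r - 1]$: the finite Postnikov filtration of $A$ (with graded pieces $\pi_t(A)[t]$, $t \in [0, r - 1]$) induces on $M$ a filtration whose graded pieces $(M \otimes^{\L}_A \pi_t(A))[t]$ have homotopy in $[c + t, d + t]$, and that of $M$ lies in the union of these. Applying these with $(c, d) = (a - b, b - a)$ yields $\pi_s \Map_A(K, L) = 0$ for $s \geq (b - a) + r = n$, so every mapping space is $(n-1)$-truncated, which is the defining condition for $\Perf_{[a, b]}(A)$ to be an $n$-category.

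For (2), the ``only if'' direction is immediate from the alternative formulation of Tor-amplitude in Definition \ref{Definition:Tor-amplitude}: any discrete $\pi_0(B)$-module $N$ is discrete as a $\pi_0(A)$-module via $\pi_0(A) \to \pi_0(B)$, and $(K \otimes^{\L}_A B) \otimes^{\L}_B N \simeq K \otimes^{\L}_A N$. For the converse, I would reduce to ordinary rings by setting $K_0 := K \otimes^{\L}_A \pi_0(A) \in \Perf(\pi_0(A))$ and rewriting the hypothesis, using $K \otimes^{\L}_A B \otimes^{\L}_B \pi_0(B) \simeq K_0 \otimes^{\L}_{\pi_0(A)} \pi_0(B)$, as: $K_0 \otimes^{\L}_{\pi_0(A)} \pi_0(B)$ has Tor-amplitude in $[a, b]$ over $\pi_0(B)$. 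The goal is then the corresponding statement for $K_0$ over $\pi_0(A)$. Since Tor-amplitude of a perfect complex over an ordinary ring is detected on residue fields, I would fix a prime $\mathfrak{p} \subset \pi_0(A)$, use surjectivity of $\Spec \pi_0(B) \to \Spec \pi_0(A)$ to produce a prime $\mathfrak{q} \subset \pi_0(B)$ above $\mathfrak{p}$, and invoke faithful flatness of the field extension $\kappa(\mathfrak{p}) \hookrightarrow \kappa(\mathfrak{q})$ to transfer the vanishing of $\pi_s$ in $K_0 \otimes^{\L}_{\pi_0(A)} \kappa(\mathfrak{q})$ for $s \notin [a, b]$ to the desired vanishing over $\kappa(\mathfrak{p})$.

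I expect the main delicacy to lie in (1), specifically in the index bookkeeping that shows the three bounds combine to give exactly the truncation threshold $n = b - a + r$, and in locating the precise references in \cite{HA} for compatibility of duality of perfect modules with base change and for additivity of Tor-amplitude under tensor. Part (2) is essentially formal once the classical residue-field criterion for Tor-amplitude over ordinary rings is invoked.
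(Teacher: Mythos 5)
Your proposal is correct and follows essentially the same route as the paper's proof: part (1) via duality ($\Map_A(K,L)\simeq K^\vee\otimes^{\L}_A L$, with $K^\vee\in\Perf_{[-b,-a]}(A)$ read off from base change to $\pi_0(A)$) together with a bound on homotopy groups coming from Tor-amplitude and the truncation of $A$ (the paper cites \cite[Proposition 7.2.4.23(5)]{HA} where you unwind a Postnikov filtration directly, but this is the same estimate), and part (2) by reducing to $\pi_0$ and invoking detection of Tor-amplitude on residue fields, exactly as in the cited proof of \cite[Theorem 11.2(2)]{BS}.
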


\begin{proof}
(1) By \cite[Proposition 2.3.4.18]{HTT}, it suffices to prove that the mapping space $\Map(K, L)$ is $(n-1)$-truncated for all $K, L \in \Perf_{[a, b]}(A)$.
By \cite[Proposition 7.2.4.4]{HA}, there is a perfect module $K^{\vee} \in \Perf(A)$ such that the underlying space of $K^\vee \otimes^{\L}_A L$ is equivalent to $\Map(K, L)$ functorially in $L \in \Mod(A)$.
(The module $K^\vee$ is a dual of $K$ in the symmetric monoidal $\infty$-category $\Mod(A)$; see also the proof of \cite[Proposition 2.7.28]{DAGVIII}.)
We have
\[
K^\vee \otimes^{\L}_A \pi_0(A) \cong R\Hom_{\pi_0(A)}(K \otimes^{\L}_A \pi_0(A), \pi_0(A)).
\]
It follows that $K^\vee \in \Perf_{[-b, -a]}(A)$, and hence $K^\vee \otimes^{\L}_A L \in \Perf_{[-b+a, b-a]}(A)$ for $L \in \Perf_{[a, b]}(A)$.
Then we see that the homotopy group $\pi_m(K^\vee \otimes^{\L}_A L)$ is trivial for $m \geq n$ by \cite[Proposition 7.2.4.23 (5)]{HA} and the assumption on $\pi_m(A)$.
Thus $\Map(K, L)$ is $(n-1)$-truncated as desired.

(2) This follows from the following fact.
Let $K \in \Perf(R)$ be a perfect complex over a commutative ring $R$.
If $K \otimes^\L_R \kappa(x) \in \Perf_{[a, b]}(\kappa(x))$ for every point $x \in \Spec R$, where $\kappa(x)$ is the residue field of $x$, then we have $K \in \Perf_{[a, b]}(R)$; see the proof of \cite[Theorem 11.2 (2)]{BS}.
\end{proof}

We record some results on perfect modules over ``derived complete'' $\E_\infty$-rings.
Let $R$ be a commutative ring and let $I=(x_1, \dotsc, x_r) \subset R$ be a finitely generated ideal.
Let $R \rightarrow A$ be a map of $\E_\infty$-rings.
We say that a module $M \in \Mod(A)$ is \textit{derived $I$-complete} if $M$ is derived $I$-complete (in the sense of \cite[Tag 091S]{SP}) when we regard it as an object of $\mathcal{D}(R)$; see also \cite[Definition 7.3.1.1 and Corollary 7.3.3.6]{SAG}.

\begin{prop}\label{Proposition:derived complete and reduction modulo I}
Let $A$ be a connective $\E_\infty$-ring over $R$.
We assume that $A$ is derived $I$-complete.
Let $a, b$ be integers with $a \leq b$.
\begin{enumerate}
    \item We put $S:=\pi_0(A)/I\pi_0(A)$.
    Let $K \in \Mod(A)$ be a module which is almost connective, i.e.\ there exists an integer $n$ such that the homotopy group $\pi_m(M)$ is trivial for every $m < n$.
    Then $K$ belongs to $\Perf_{[a, b]}(A)$ if and only if
    $K$ is derived $I$-complete and
    $K \otimes^\L_{A}S$ belongs to $\Perf_{[a, b]}(S)$.
    \item We put $A_m:=A \otimes^\L_R R/^\L (x^m_1, \dotsc,x^m_r)$ for an integer $m \geq 1$.
    We have
    \[
    \Perf(A) \overset{\sim}{\rightarrow} \lim_{m \geq 1} \Perf(A_m)
    \quad \text{in} \quad \Catinfty.
    \]
    Similarly, we have
    \[
    \Perf_{[a, b]}(A) \overset{\sim}{\rightarrow} \lim_{m \geq 1} \Perf_{[a, b]}(A_m) \quad \text{in} \quad \Catinfty.
    \]
\end{enumerate}
\end{prop}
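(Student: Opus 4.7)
My overall strategy is to treat part~(1) as the essential input and to deduce part~(2) from it together with standard manipulations of the Koszul tower $\{A_m\}$. Part~(1) is a form of derived Nakayama for perfect modules with Tor-amplitude tracking; part~(2) then becomes a (technical) descent statement along the tower.

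For part~(1), the ``only if'' direction is straightforward: the derived $I$-complete modules form a full subcategory containing $A$ and closed under retracts, shifts and finite colimits, hence containing $\Perf(A)$; Tor-amplitude is preserved under base change. For the ``if'' direction, I would introduce the intermediate connective $\E_\infty$-ring $A':=A\otimes^{\L}_R R/^{\L}(x_1,\dotsc,x_r)$, whose $\pi_0$ is $S$. Since $A'\to S$ induces an isomorphism on $\pi_0$, Lemma~\ref{Lemma:Tor-amplitude}(2) converts the Tor-amplitude hypothesis on $K\otimes^{\L}_A S$ into $K\otimes^{\L}_A A'\in\Perf_{[a,b]}(A')$, once perfectness of $K\otimes^{\L}_A A'$ over $A'$ is known. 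It then remains to show: if $K\in\Mod(A)$ is derived $I$-complete, almost connective, and $K\otimes^{\L}_A A'\in\Perf_{[a,b]}(A')$, then $K\in\Perf_{[a,b]}(A)$. I would prove this by induction on $r$, reducing to the principal case $I=(x)$; there the cofiber sequence $K\xrightarrow{x}K\to K\otimes^{\L}_A A'$ together with derived $x$-completeness of $K$ allows one to lift a bounded resolution of $K\otimes^{\L}_A A'$ to a resolution of $K$, with length controlled by $b-a$.

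For part~(2), full faithfulness reduces, for $K,L\in\Perf(A)$, to the statement that the internal mapping module $K^{\vee}\otimes^{\L}_A L$ (perfect by \cite[Proposition 7.2.4.4]{HA}, hence by part~(1) derived $I$-complete) satisfies $K^{\vee}\otimes^{\L}_A L\cong\lim_m(K^{\vee}\otimes^{\L}_A L)\otimes^{\L}_A A_m$, which is the standard identification of derived $I$-completeness as recovery from the Koszul tower; applying $\Omega^{\infty}$ gives the equivalence of mapping spaces. For essential surjectivity, given a compatible system $(K_m)$ in $\lim_m\Perf(A_m)$, set $K:=\lim_mK_m\in\Mod(A)$. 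Reducing $K_1$ along $A_1\to S$, which is surjective on $\Spec\pi_0$, yields via Lemma~\ref{Lemma:Tor-amplitude}(2) a uniform bound $K_m\in\Perf_{[a,b]}(A_m)$, hence uniform almost connectivity of the $K_m$ and almost connectivity of $K$; derived $I$-completeness is inherited from the limit. To identify $K\otimes^{\L}_A A_n\cong K_n$, I would apply part~(1) to the natural map over $A_n$ (which is itself derived $I$-complete, as $I$ acts nilpotently on $\pi_0(A_n)$): both sides are perfect $A_n$-modules, and the map becomes an equivalence after further reduction modulo $I$ by the tower compatibility. With $K\otimes^{\L}_A S\cong K_1\otimes^{\L}_{A_1}S\in\Perf_{[a,b]}(S)$ in hand, part~(1) yields $K\in\Perf_{[a,b]}(A)$, completing the argument.

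The principal obstacle I expect is the Tor-amplitude bookkeeping in the derived Nakayama step of part~(1): deducing the precise bound $[a,b]$, not merely perfectness, from the reduction modulo $I$. Most presentations of derived Nakayama in the literature establish perfectness alone, and ensuring that the lifted resolution in the principal case has length dictated by $b-a$ rather than by incidental features of $K$ requires care. A secondary technical point, in part~(2), is that $A_m\otimes^{\L}_A A_n\neq A_{\min(m,n)}$ in general due to Koszul redundancy, so the identification $K\otimes^{\L}_A A_n\cong K_n$ cannot be obtained by a naive tower collapse and must instead be argued by re-invoking the Nakayama-type statement of part~(1).
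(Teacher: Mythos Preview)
Your outline diverges from the paper in both parts, and in part~(2) there is a genuine gap.

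\medskip

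\textbf{Part~(1).} The paper does not carry out an induction on $r$ with resolution-lifting. It simply cites \cite[Corollary~8.3.5.9]{SAG} for the perfectness statement, and for the Tor-amplitude refinement it observes that $K\in\Perf_{[a,b]}(A)$ is equivalent to ``$K$ has Tor-amplitude $\le b$ and $K^{\vee}$ has Tor-amplitude $\le -a$,'' which reduces the two-sided bound to two one-sided bounds handled by \cite[Corollary~8.3.5.8]{SAG}. This duality trick completely bypasses the obstacle you flag as principal. Your route is not wrong in spirit, but note that as written it has a logical wrinkle: to pass from the hypothesis on $K\otimes^{\L}_A S$ to a hypothesis on $K\otimes^{\L}_A A'$ via Lemma~\ref{Lemma:Tor-amplitude}(2), you need $K\otimes^{\L}_A A'$ to already be perfect over $A'$, and this is not given; you would have to first establish perfectness (say by the same inductive scheme, or by the $I=0$ case applied to $A'$) and only then upgrade to the Tor-amplitude statement.

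\medskip

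\textbf{Part~(2).} Here the gap is more serious. In your essential-surjectivity step you assert that ``both sides are perfect $A_n$-modules'' and that the map $K\otimes^{\L}_A A_n\to K_n$ becomes an equivalence after reduction modulo $I$. Neither is justified at that point. Perfectness of $K\otimes^{\L}_A A_n$ over $A_n$ would follow from perfectness of $K$ over $A$, which is the goal. And ``equivalence after reduction modulo $I$'' unpacks to $K\otimes^{\L}_A S \simeq K_n\otimes^{\L}_{A_n} S$; the right side is $K_1\otimes^{\L}_{A_1} S$ by tower compatibility, but the left side is $(\lim_m K_m)\otimes^{\L}_A S$, and you have given no reason why this tensor product commutes with the inverse limit. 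Appealing to part~(1) does not help: part~(1) detects membership in $\Perf_{[a,b]}$, not whether a given map is an equivalence, and the derived-Nakayama-for-vanishing statement you implicitly need still requires the reduction-mod-$I$ computation you cannot yet perform.

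The paper closes this gap differently: after shifting so that $K_1$ is connective, one checks that all $K_m$ are connective and that the transition maps $\pi_0(K_{m+1})\to\pi_0(K_m)$ are surjective; then \cite[Lemma~8.3.5.4]{SAG} directly gives $K\otimes^{\L}_A A_m\simeq K_m$. With this in hand, derived $I$-completeness of $K$ (as a limit of complete modules) and part~(1) yield $K\in\Perf(A)$, and the unit/counit verification is routine. The $\pi_0$-surjectivity of the transition maps is the concrete input your argument is missing.
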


\begin{proof}
(1) By \cite[Corollary 8.3.5.9]{SAG}, we see that $K$ belongs to $\Perf(A)$ if and only if 
$K$ is derived $I$-complete
and $K \otimes^\L_{A}S$ belongs to $\Perf(S)$.
We shall prove that a perfect module $K \in \Perf(A)$ belongs to $\Perf_{[a, b]}(A)$ if and only if $K \otimes^\L_{A}S$ belongs to $\Perf_{[a, b]}(S)$.
Let $K^\vee$ be a dual of $K$; see the proof of Lemma \ref{Lemma:Tor-amplitude} and the references therein.
We have $(K^\vee)^\vee \cong K$.
It follows that $K$ has Tor-amplitude in $[a, b]$ if and only if 
$K$ has Tor-amplitude $\leq b$ and $K^\vee$ has Tor-amplitude $\leq -a$ in the sense of \cite[Definition 7.2.4.21]{HA}.
Thus our claim follows from \cite[Corollary 8.3.5.8]{SAG}.

(2) The second equivalence follows from the first one by (1).
We prove the first assertion.
Let
\[
\Phi \colon \Mod(A) \to \lim_{m \geq 1} \Mod(A_m)
\]
denote the natural functor.
An object of $\lim_{m \geq 1} \Mod(A_m)$ can be identified with a family of objects $\{ K_m \}_{m \geq 1}$, where $ K_m \in \Mod(A_m)$,
together with equivalences
$K_{m+1}\otimes^\L_{A_{m+1}} A_m \overset{\sim}{\to} K_m$ ($m \geq 1$).
The functor $\Phi$ admits a right adjoint
\[
\Psi \colon \lim_{m \geq 1} \Mod(A_m) \to \Mod(A)
\]
which sends a family $\{ K_m \}_{m \geq 1}$ as above to $\lim_{m} K_m$, where we regard $K_m$ as an object of $\Mod(A)$.
We claim that if $K_m \in \Perf(A_m)$ for every $m \geq 1$, then $K:=\lim_{m} K_m$ belongs to $\Perf(A)$ and we have
\begin{equation}\label{equation:counit}
K \otimes^\L_A A_m \overset{\sim}{\to} K_m.
\end{equation}
In order to prove the claim, we may assume that $K_1$ is connective.
It then follows that $K_m$ is connective for every $m \geq 1$, and hence $K$ is also connective since $\pi_0(K_{m+1}) \to \pi_0(K_m)$ is surjective for every $m \geq 1$.
Now, \cite[Lemma 8.3.5.4]{SAG} implies that
the map $K \otimes^\L_A A_m \to K_m$ is an equivalence.
Since each $K_m$ is derived $I$-complete as an $A$-module, the limit $K$ is also derived $I$-complete.
Therefore $K$ is perfect by (1).

We now obtain the following adjunction
\[
\adj{F}{\Perf(A)}{\lim_{m \geq 1} \Perf(A_m)}{G}.
\]
It suffices to prove that the unit transformation $\id \to G \circ F$ and the counit transformation $F \circ G \to \id$ are equivalences.
For a $K \in \Perf(A)$, we see that $(G \circ F)(K)$ is isomorphic to the derived $I$-completion of $K$ as an object of $\Mod(R)$; see, for instance, \cite[Tag 0920]{SP}.
Since $K$ is derived $I$-complete by (1), we have $K \overset{\sim}{\to} (G \circ F)(K)$.
Finally, we shall prove $F \circ G \overset{\sim}{\to} \id$.
We note that a map in $\lim_{m \geq 1} \Perf(A_m)$ is an equivalence if and only if, for every $m$, its image in $\Perf(A_m)$ is an equivalence\footnote{This can be deduced from the fact that a natural transformation between two functors from a simplicial set to an $\infty$-category is an equivalence if and only if it is an objectwise equivalence (which can be found, e.g., in \cite[Corollary 3.5.12]{Cisinski}).
One can also check it by considering fiber sequences. We note that $\lim_{m \geq 1} \Perf(A_m)$ is also a limit in the $\infty$-category of stable $\infty$-categories (\cite[Theorem 1.1.4.4]{HA}).}.
Thus $(\ref{equation:counit})$ implies $F \circ G \overset{\sim}{\to} \id$.
\end{proof}

We conclude this section with the following result on derived quotients of perfectoid rings.

\begin{lem}\label{Lemma:derived quotient of perfectoid}
Let $R$ be a perfectoid ring and $\varpi \in R$ an element with $p \in (\varpi^p)$ such that $R$ is $\varpi$-complete.
Let $\varpi^\flat \in R^\flat$ be an element such that $(\varpi^\flat)^\sharp$ is a unit multiple of $\varpi$.
Let $A$ be a $\varpi$-complete perfectoid $R$-algebra.
Then we have an isomorphism of $\E_\infty$-rings
\[
A^\flat/^\L \varpi^\flat \cong A/^\L \varpi
\]
which is functorial in $A$.
\end{lem}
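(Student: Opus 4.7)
The plan is to reduce both sides to derived quotients of $W(A^\flat)$ and then compare them via a unit change of generators.

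Let $\xi \in W(A^\flat)$ be a generator of $\ker(\theta \colon W(A^\flat) \to A)$, which is a non-zero-divisor. Since $\theta([\varpi^\flat]) = (\varpi^\flat)^\sharp$ is a unit multiple of $\varpi$ in $A$, I first observe
\[
A/^\L \varpi \simeq A \otimes^\L_{W(A^\flat)} W(A^\flat)/^\L [\varpi^\flat] \simeq W(A^\flat)/^\L (\xi, [\varpi^\flat]),
\]
using $A \cong W(A^\flat)/^\L \xi$ (valid since $\xi$ is a non-zero-divisor). Similarly, since $A^\flat$ is a perfect $\F_p$-algebra, $W(A^\flat)$ is $p$-torsion free and $A^\flat \cong W(A^\flat)/^\L p$, so
\[
A^\flat/^\L \varpi^\flat \simeq W(A^\flat)/^\L (p, [\varpi^\flat]).
\]

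It thus suffices to construct a functorial equivalence
\[
W(A^\flat)/^\L (\xi, [\varpi^\flat]) \simeq W(A^\flat)/^\L (p, [\varpi^\flat]).
\]
Setting $B := W(A^\flat)/^\L [\varpi^\flat]$, it is enough to show that the images of $\xi$ and $p$ in $\pi_0 B = W(A^\flat)/[\varpi^\flat]$ differ by a unit; then $B/^\L \xi \simeq B/^\L p$ as $\E_\infty$-$B$-algebras by the standard fact that for an $\E_\infty$-ring $B$, the derived quotient $B/^\L x$ depends on $x \in \pi_0 B$ only up to multiplication by a unit. Writing $\xi = \sum_{i \geq 0} [c_i] p^i$ with $c_i \in A^\flat$, the condition $\theta(\xi) = 0$ together with the identification of $\theta \bmod p$ with the canonical map $A^\flat \to A/p$, $x \mapsto x^\sharp \bmod p$, yields $c_0^\sharp \in pA \subset \varpi A$. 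The classical isomorphism $A^\flat/\varpi^\flat \cong A/\varpi$ for perfectoid rings (\cite[Lemma 3.10]{BMS}) then forces $c_0 \in (\varpi^\flat) A^\flat$, say $c_0 = \varpi^\flat d$; by multiplicativity of the Teichm\"uller lift, $[c_0] = [\varpi^\flat][d]$, and hence
\[
\xi \equiv p \cdot \alpha \pmod{[\varpi^\flat]}, \qquad \alpha := \sum_{i \geq 1} [c_i] p^{i-1}.
\]
Because $A$ is perfectoid, $\xi$ is a distinguished element, so $c_1 \in A^\flat$ is a unit; combined with the $p$-adic completeness of $W(A^\flat)$, this makes $\alpha$ a unit in $W(A^\flat)$, hence a unit in $\pi_0 B$.

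The argument is largely bookkeeping once the two sides are identified with Koszul-type derived quotients of $W(A^\flat)$. The substantive input is the distinguished-element property of a generator of $\ker\theta$ for a perfectoid ring, which allows the unit replacement of $\xi$ by $p$ modulo $[\varpi^\flat]$; the main subtlety is verifying that each identification is an equivalence of $\E_\infty$-rings and not merely of $B$-modules. Functoriality in $A$ follows from naturality of the Witt-vector construction combined with the unit-invariance of derived quotients, which absorbs the ambiguity in the choice of $\xi$.
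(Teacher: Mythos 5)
Your proof is correct in substance, but it takes a genuinely different route from the paper's, and one step deserves more care. Both arguments ultimately identify $A/^\L\varpi$ and $A^\flat/^\L\varpi^\flat$ with derived quotients of $W(A^\flat)$ by a pair of elements and then perform a unit change of variables, but the mechanisms differ. The paper first normalizes $\varpi=(\varpi^\flat)^\sharp$ and then passes to $p$-th powers, reducing to $A^\flat/^\L(\varpi^\flat)^p\cong A/^\L\varpi^p$; this lets it use $p\in(\varpi^p)$ to write $p=-\theta(x)\varpi^p$ with $x\in W(R^\flat)$, producing the explicit generator $\xi=p+x[\varpi^\flat]^p$ of $\Ker\theta$. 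The change of variables $T_1\mapsto S_1+xS_2$, $T_2\mapsto S_2$ then takes place entirely inside $W(R^\flat)[S_1,S_2]$, so base change along $W(R^\flat)\to W(A^\flat)$ makes functoriality manifest with no further discussion. You instead take an arbitrary generator $\xi$, keep $\varpi$ as is, and argue modulo $[\varpi^\flat]$ (rather than $[\varpi^\flat]^p$) via the Teichm\"uller expansion $\xi=\sum_i[c_i]p^i$, invoking that a generator of $\Ker\theta$ is ``distinguished'', i.e.\ that $c_1$ is a unit. This avoids the pass to $p$-th powers, which is aesthetically cleaner, but it imports an extra fact: the distinguished-element property is not part of \cite[Lemma 3.10]{BMS} and should be cited (or proved --- one can deduce it by observing that $p=0$ in $W(A^\flat)/([\varpi^\flat],\xi)\cong A/\varpi$ and using that $(p,[\varpi^\flat])$ is a regular sequence and $W(A^\flat)$ is $(p,[\varpi^\flat])$-adically complete). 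Finally, your functoriality paragraph is hand-wavy: ``naturality of the Witt-vector construction'' does not by itself handle the choices of $c_i$, $d$, and $\alpha$. The fix is the same as the paper's: pick $\xi\in W(R^\flat)$, note that the Teichm\"uller coefficients $c_i$, the element $d$ with $c_0=\varpi^\flat d$, and $\alpha$ can all be taken in/over $R^\flat$, and phrase the change of variables as an automorphism of $W(R^\flat)[X,Y]$ ($X\mapsto X$, $Y\mapsto[d]X+\alpha Y$), so that everything over $A$ is obtained by base change from $R$.
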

\begin{proof}
We may assume that $\varpi=(\varpi^\flat)^\sharp$.
Then $\varpi$ admits a $p$-th root,
and thus it suffices to prove that
$A^\flat/^\L (\varpi^\flat)^p \cong A/^\L (\varpi)^p$
functorially in $A$.
Since the map $\theta \colon W(R^\flat) \to R$ is surjective
and $p \in (\varpi^p)$,
there exists an element $x \in W(R^\flat)$ such that $p=-\theta(x)\varpi^p$.
Then $\xi:= p + x[\varpi^\flat]^p$ is a generator of $\Ker \theta$; see the proof of \cite[Lemma 3.10]{BMS}.
We consider two maps
\[
W(R^\flat)[S_1, S_2] \to W(A^\flat) \quad \text{defined by} \quad S_1 \mapsto p, \quad S_2 \mapsto [\varpi^\flat]^p
\]
and
\[
W(R^\flat)[T_1, T_2] \to W(A^\flat) \quad \text{defined by} \quad T_1 \mapsto \xi, \quad T_2 \mapsto [\varpi^\flat]^p.
\]
Since both $p$ and $\xi$ are non-zero divisors in $W(A^\flat)$, we have
\[
W(A^\flat) \otimes^\L_{W(R^\flat)[S_1, S_2]} W(R^\flat) \cong  A^\flat/^\L (\varpi^\flat)^p
\quad
\text{and}
\quad
W(A^\flat) \otimes^\L_{W(R^\flat)[T_1, T_2]} W(R^\flat) \cong A/^\L (\varpi)^p.
\]
Here $W(R^\flat)[S_1, S_2] \to W(R^\flat)$ is defined by $S_i \mapsto 0$ and similarly for $W(R^\flat)[T_1, T_2] \to W(R^\flat)$.
To conclude the proof, it suffices to observe that we have an isomorphism
$W(R^\flat)[T_1, T_2] \overset{\sim}{\to} W(R^\flat)[S_1, S_2]$ defined by $T_1 \mapsto S_1+xS_2$, $T_2 \mapsto S_2$, which is compatible with the above maps.
\end{proof}

\section{An analogue for derived quotients of perfect rings}\label{Section:An analogue for derived quotients of perfect rings}

\subsection{An analogue for derived quotients of perfect rings}\label{Subsection:An analogue for derived quotients of perfect rings}

In this section, we prove the following analogue of Theorem \ref{Theorem:descent for pefect rings} for derived quotients of perfect rings:

\begin{thm}\label{Theorem:descent for perfect complex for derived quotients}
Let $R$ be a perfect $\F_p$-algebra and $I=(x_1, \dotsc, x_r) \subset R$ a finitely generated ideal.
Let $a, b$ be integers with $a \leq b$.
Then the functors
\[
A \mapsto \mathcal{F}(A):=\Perf(A/^\L (x_1, \dotsc, x_r)) \quad \text{and} \quad  A \mapsto \mathcal{F}_{[a, b]}(A):=\Perf_{[a, b]}(A/^\L (x_1, \dotsc, x_r))
\]
from the category of perfect $R$-algebras to $\Catinfty$ satisfy $\arc$-hyperdescent (see also Definition \ref{Definition:hyperdescent}).
\end{thm}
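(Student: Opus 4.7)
The plan is to deduce the theorem from Theorem~\ref{Theorem:descent for pefect rings}, exploiting the fact that $A' := A/^\L(x_1,\dotsc,x_r)$ is the Koszul complex on $(x_1,\dotsc,x_r)$ over $A$ and hence is perfect as an $A$-complex. Fix an $\arc$-hypercover $A \to B^\bullet$ of perfect $R$-algebras and put $B'^n := B^n/^\L(x_1,\dotsc,x_r) \cong A'\otimes^\L_A B^n$. Because $A'$ and each $B'^n$ are perfect over $A$ and $B^n$ respectively, any perfect module over a derived quotient is in particular perfect over the base ring.

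For fully faithfulness, given $K, L \in \Perf(A')$, the internal hom $\RHom_{A'}(K,L) = K^\vee \otimes^\L_{A'} L$ is perfect over $A'$, hence perfect over $A$. Theorem~\ref{Theorem:descent for pefect rings} applied to this $A$-module, together with the identification $\RHom_{A'}(K,L) \otimes^\L_A B^n \cong \RHom_{B'^n}(K\otimes^\L_{A'} B'^n, L\otimes^\L_{A'} B'^n)$, yields the comparison of mapping spaces. For essential surjectivity, given a compatible family $(M^n) \in \lim_\Delta \Perf(B'^\bullet)$, I would forget to $B^n$-modules to obtain a family in $\lim_\Delta \Perf(B^\bullet)$, descend it via Theorem~\ref{Theorem:descent for pefect rings} to $M \in \Perf(A)$, and descend the compatible action maps $A'\otimes^\L_A M^n \to M^n$ together with the module axioms by the same theorem.

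The main obstacle is to show that $M$ is actually perfect over $A'$, not merely equipped with an $A'$-action---an $A'$-module perfect over $A$ need not be perfect over $A'$. I would invoke Proposition~\ref{Proposition:derived complete and reduction modulo I}(1): since $M$ is derived $I$-complete (as $A'$ is) and almost connective (being perfect over $A$), it suffices to verify that $M\otimes^\L_{A'} (A/I) \in \Perf(A/I)$. The key computation is the Koszul identity
\[
A'\otimes^\L_A (A/I) \cong \bigoplus_{S\subset\{1,\dotsc,r\}} (A/I)[|S|],
\]
which holds because each $x_i$ vanishes in $A/I$; tensoring over $A'$ with $M$ and using associativity gives $M\otimes^\L_A (A/I) \cong \bigoplus_S (M\otimes^\L_{A'} (A/I))[|S|]$, whose left-hand side is perfect over $A/I$ as the base change of the perfect $A$-module $M$, so the summand $M\otimes^\L_{A'} (A/I)$ is perfect over $A/I$ as a retract. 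For the Tor-amplitude refinement $\mathcal{F}_{[a,b]}$, the same proposition reduces the claim to $M\otimes^\L_{A'} (A/I) \in \Perf_{[a,b]}(A/I)$; its base change to $B^0/I$ equals $M^0\otimes^\L_{B'^0} (B^0/I) \in \Perf_{[a,b]}(B^0/I)$, and Lemma~\ref{Lemma:Tor-amplitude}(2) descends the Tor-amplitude along the surjective-on-$\Spec$ map $A/I \to B^0/I$.
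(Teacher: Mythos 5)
Your approach is essentially the paper's second proof (Section \ref{Subsection:The second proof}, via Proposition \ref{Proposition:reduction argument}), deducing the theorem from Theorem \ref{Theorem:descent for pefect rings}. The full-faithfulness argument via the perfect $A$-module $\RHom_{A'}(K,L)$, where $A':=A/^\L(x_1,\dotsc,x_r)$, is correct. However, there is a genuine gap in the essential surjectivity argument at the Koszul step.

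The asserted identity $A' \otimes^\L_A (A/I) \cong \bigoplus_{S}(A/I)[|S|]$ holds only in $\Mod(A/I)$ (or $\Mod(A)$), not in $\Mod(A')$ where each $(A/I)[|S|]$ carries the $A'$-action via $A' \to \pi_0(A')=A/I$ --- and the latter is what you would need in order to tensor over $A'$ with $M$. The obstruction is the action of $\pi_{>0}(A')$: it can be nonzero on $A' \otimes^\L_A (A/I)$ but is zero on each $(A/I)[|S|]$. For instance, with $A$ the perfection of $\F_p[y,z]/(yz)$, $r=1$, $x_1=y$, the element $z\in\pi_1(A')=\mathrm{Ann}_A(y)$ has nonzero image in $\pi_1(A'\otimes^\L_A A/I)=A/(y)$, so $A'\otimes^\L_A A/I\not\cong (A/I)\oplus(A/I)[1]$ in $\Mod(A')$; what one does have is a finite decreasing filtration of $A'\otimes^\L_A A/I$ by $A'$-submodules with graded pieces $(A/I)[|S|]$, and a filtration does not yield the retract you invoke. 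More fundamentally, the implication you are aiming for --- that an $A'$-module $M$ which is perfect over $A$ has $M\otimes^\L_{A'}(A/I)\in\Perf(A/I)$ --- is false in general: for $A=\F_p$, $r=1$, $x_1=0$, and $M=\F_p$ with $A'=\F_p\oplus\F_p[1]$-module structure via $A'\to\F_p$, the module $M$ is perfect over $A$ but $M\otimes^\L_{A'}\F_p$ has unbounded homotopy. The paper's Proposition \ref{Proposition:reduction argument} therefore argues differently: it first shows $M$ is almost perfect over $A'$ (Lemma \ref{Lemma:almost perfect push}, using that $A'$ is perfect over $A$) and then verifies perfectness at each maximal ideal of $\pi_0(A')=A/I$ using both $M\otimes^\L_{A'}\overline{B}^0\cong K^0\in\Perf(\overline{B}^0)$ and the surjectivity of $\Spec B^0/I\to\Spec A/I$. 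That surjectivity is exactly the extra input your Koszul argument omits at this step, even though you do (correctly) invoke it afterwards for the Tor-amplitude refinement.
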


We give two proofs of Theorem \ref{Theorem:descent for perfect complex for derived quotients}.
The first proof, given below, goes along the same line as that of Theorem \ref{Theorem:descent for pefect rings}; the notion of a \textit{descendable} map of $\E_\infty$-rings introduced by Mathew (\cite[Definition 3.18]{Mathew16}) plays a central role.
A key input is the fact that an \textit{$h$-cover} of perfect $\F_p$-algebras is descendable, which is proved by Bhatt--Scholze.

The second proof, which we learned from B.\ Bhatt, is given in the next subsection; we deduce Theorem \ref{Theorem:descent for perfect complex for derived quotients} from Theorem \ref{Theorem:descent for pefect rings}.
It may be easier than the first one for the reader who is familiar with some fundamental results on (almost) perfect modules provided in \cite{HA, SAG, SP}.

\begin{proof}
The functor $\mathcal{F}$ commutes with finite products (see, for instance, \cite[Lemma D.3.5.5]{SAG}) and filtered colimits (see \cite[Lemma 7.3.5.13]{HA}).
Then we see that the same results hold for $\mathcal{F}_{[a, b]}$.
To prove the theorem, it is enough to prove that $\mathcal{F}_{[a, b]}$ satisfies $\arc$-hyperdescent (for all $a, b$ with $a \leq b$).
Moreover, it suffices to prove that $\mathcal{F}_{[a, b]}$ satisfies $\arc$-descent since $\mathcal{F}_{[a, b]}(A)$ is equivalent to an $N$-category for every perfect $R$-algebra $A$ by Lemma \ref{Lemma:Tor-amplitude} (1), where $N:=b-a+r+1$.
We want to apply \cite[Proposition 4.8]{BM}.
For this, we need to prove the following assertions.

\begin{enumerate}
    \item[(i)]($v$-descent) For every $v$-cover $A \to B$ of perfect $R$-algebras (in the sense of \cite[Definition 2.1]{BS} or equivalently \cite[Definition 2.2]{Rydh2010}) with \v{C}ech conerve $A \to B^{\bullet}$, the functor
\[
\mathcal{F}_{[a, b]}(A) \to \lim_{\Delta} \mathcal{F}_{[a, b]}(B^\bullet)
\]
is an equivalence in $\Catinfty$ (or equivalently, in the full subcategory $\mathrm{Cat}_N \subset \Catinfty$ consisting of those $\infty$-categories which are equivalent to $N$-categories).
    \item[(ii)](aic-$v$-excision) For every valuation ring $V$ over $R$ with algebraically closed fraction field and every prime ideal $\p \in \Spec(V)$,
the square
\[
\xymatrix{
\mathcal{F}_{[a, b]}(V) \ar^-{}[r]  \ar[d]_-{} & \mathcal{F}_{[a, b]}(V/\p)  \ar[d]_-{}  \\
\mathcal{F}_{[a, b]}(V_\p)  \ar[r]^-{} & \mathcal{F}_{[a, b]}(\kappa(\p)) 
}
\]
is a pullback square in $\Catinfty$ (or equivalently, in $\mathrm{Cat}_N$).
\end{enumerate}

We first prove (i).
We can write $B$ as a colimit of a filtered system $\{ B_{0, i} \}_{i \in I}$ of finitely presented $A$-algebras.
Let $B_i:= (B_{0, i})_{\mathrm{perf}} := \colim_{x \mapsto x^p} B_{0, i}$ be the perfection of $B_{0, i}$.
It follows that $A \to B_i$ is an $h$-cover (in the sense of \cite[Definition 11.1]{BS}) and hence it is descendable in the sense of \cite[Definition 3.18]{Mathew16} (or equivalently \cite[Definition 11.14]{BS}) by \cite[Theorem 11.27]{BS}.
The base change 
\[
f_i \colon A/^\L (x_1, \dotsc, x_r) \to B_i/^\L (x_1, \dotsc, x_r)
\]
is also descendable.
Let $A \to B^{\bullet}_i$ denote the \v{C}ech conerve of $A \to B_i$ taken in the category $\Ring$ of commutative rings.
The (derived) \v{C}ech conerve of $f_i$ taken in the $\infty$-category $\CAlg$ of $\E_\infty$-algebras
is isomorphic to the base change
\[
A/^\L (x_1, \dotsc, x_r) \to B^\bullet_i/^\L (x_1, \dotsc, x_r)
\]
of
$A \to B^{\bullet}_i$ along $A \to A/^\L (x_1, \dotsc, x_r)$ by \cite[Lemma 3.16 or Proposition 11.6]{BS}.
Therefore, by \cite[Proposition 3.22]{Mathew16}, we have an equivalence of symmetric monoidal $\infty$-categories for every $i \in I$
\[
\Mod(A/^\L (x_1, \dotsc, x_r)) \overset{\sim}{\to} \lim_{\Delta} \Mod(B^\bullet_i/^\L (x_1, \dotsc, x_r)), \quad \text{and hence} \quad 
\mathcal{F}(A) \overset{\sim}{\to} \lim_{\Delta} \mathcal{F}(B^\bullet_i);
\]
here, for the second equivalence, we use \cite[Proposition 4.6.1.11]{HA} and the fact that an object of $\Mod(A/^\L (x_1, \dotsc, x_r))$ is perfect if and only if it is dualizable (see \cite[Proposition 2.7.28]{DAGVIII}).
By Lemma \ref{Lemma:Tor-amplitude} (2), this implies
\[
\mathcal{F}_{[a, b]}(A) \overset{\sim}{\to} \lim_{\Delta} \mathcal{F}_{[a, b]}(B^\bullet_i)
\]
for every $i \in I$.
Since we have $\colim_{I}B_i \cong B$ and $\mathcal{F}_{[a, b]}(A)$ is equivalent to an $N$-category for every $A$,
it follows from \cite[Lemma 3.7]{BM} that the functor
$\mathcal{F}_{[a, b]}(A) \to \lim_{\Delta} \mathcal{F}_{[a, b]}(B^\bullet)$ is an equivalence.

Next, we prove (ii).
As remarked in the proof of \cite[Theorem 5.16]{BM},
the map $V \to W:=V_\p \times V/\p$ is descendable.
The base change
$
V/^\L (x_1, \dotsc, x_r) \to W/^\L (x_1, \dotsc, x_r)
$
is also descendable.
Thus, similarly to the proof of (i),
we have
\[
\mathcal{F}_{[a, b]}(V) \overset{\sim}{\to} \lim_{\Delta} \mathcal{F}_{[a, b]}(W^\bullet)
\]
for the \v{C}ech conerve
$V \to W^{\bullet}$ of $V \to W$ taken in $\Ring$.
As in the proof of \cite[Theorem 5.16]{BM}, one can prove that the right hand side of the above equivalence is a limit of the diagram
\[
\xymatrix{
& \mathcal{F}_{[a, b]}(V/\p)  \ar[d]_-{}  \\
\mathcal{F}_{[a, b]}(V_\p)  \ar[r]^-{} & \mathcal{F}_{[a, b]}(\kappa(\p)).
}
\]
This proves (ii).

With assertions (i) and (ii), we can now apply
the argument as in the proof of \cite[Proposition 4.8]{BM} to $\mathcal{F}_{[a, b]}$ (or rather the functor $A \mapsto \mathcal{F}_{[a, b]}(A_{\mathrm{perf}})$ from the category of $R$-algebras to $\mathrm{Cat}_N$)
to conclude that
$\mathcal{F}_{[a, b]}$ satisfies $\arc$-descent.
The proof of Theorem \ref{Theorem:descent for perfect complex for derived quotients} is complete.
\end{proof}

The following corollary is a key ingredient in the proof of Theorem \ref{Theorem:descent for perfectoid}.

\begin{cor}\label{Corollary:descent for derived quotient of perfectoid}
Let $R$ be a perfectoid ring and $\varpi \in R$ an element with $p \in (\varpi^p)$ such that $R$ is $\varpi$-complete.
Let $m \geq 1$ be an integer.
The functors
\[
A \mapsto \Perf(A/^\L \varpi) \quad \text{and} \quad A \mapsto \Perf(A^\flat/^\L (\varpi^\flat)^m)
\]
from the category $\mathcal{C}_{R, \varpi}$ of $\varpi$-complete perfectoid $R$-algebras to $\Catinfty$ satisfy $\varpi$-complete $\arc$-hyperdescent.
The same statement holds for perfect modules with Tor-amplitude in $[a, b]$ for all $a, b$ with $a \leq b$.
\end{cor}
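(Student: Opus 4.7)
The plan is to deduce the corollary from Theorem~\ref{Theorem:descent for perfect complex for derived quotients} via the tilting equivalence of Proposition~\ref{Proposition:tilting and arc cover} and the derived-quotient identification of Lemma~\ref{Lemma:derived quotient of perfectoid}. By Lemma~\ref{Lemma:derived quotient of perfectoid}, the functor $A \mapsto A/^\L \varpi$ on $\mathcal{C}_{R, \varpi}$ is canonically isomorphic to $A \mapsto A^\flat/^\L \varpi^\flat$, so both functors in the corollary are pulled back along the tilting equivalence $\mathcal{C}_{R, \varpi} \overset{\sim}{\to} \mathcal{C}_{R^\flat, \varpi^\flat}$, which matches $\varpi$-complete arc-hypercovers with $\varpi^\flat$-complete ones. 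It therefore suffices to prove $\varpi^\flat$-complete arc-hyperdescent for $B \mapsto \Perf(B/^\L (\varpi^\flat)^m)$ (and its Tor-amplitude in $[a,b]$ variant) on $\mathcal{C}_{R^\flat, \varpi^\flat}$.

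Given a $\varpi^\flat$-complete arc-hypercover $A \to B^\bullet$ in $\mathcal{C}_{R^\flat, \varpi^\flat}$, I would first replace it with the analogous cosimplicial object $A \to C^\bullet$ formed inside the larger category of perfect $R^\flat$-algebras (coproducts being perfections of underived tensor products, without $\varpi^\flat$-completion). The standard fact that derived $\varpi^\flat$-completion $D \to \widehat{D}$ becomes an equivalence after applying $- /^\L (\varpi^\flat)^m$ (its cone is killed by $- \otimes^\L_{R^\flat} R^\flat /^\L \varpi^\flat$, hence by induction by $- \otimes^\L_{R^\flat} R^\flat /^\L (\varpi^\flat)^m$) yields a natural equivalence $C^n/^\L (\varpi^\flat)^m \overset{\sim}{\to} B^n/^\L (\varpi^\flat)^m$ of $\E_\infty$-rings, functorially in $n$. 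The desired descent thus reduces to showing
\[
\Perf(A/^\L (\varpi^\flat)^m) \overset{\sim}{\to} \lim_{\Delta} \Perf(C^\bullet/^\L (\varpi^\flat)^m).
\]

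To extract this from Theorem~\ref{Theorem:descent for perfect complex for derived quotients}, I would augment by the complement of the vanishing locus of $\varpi^\flat$: the map $A \to B^0 \times A[1/\varpi^\flat]$ is an arc-cover of perfect $R^\flat$-algebras. Indeed, for any rank-$\leq 1$ valuation ring $V$ with a map $A \to V$, either $\varpi^\flat$ is a unit in $V$ (and the map factors through $A[1/\varpi^\flat]$, giving a trivial lift) or $\varpi^\flat$ is zero or a pseudo-uniformizer in $V$, in which case $\widehat{V}$ is a $\varpi^\flat$-complete rank-$\leq 1$ valuation ring to which the $\varpi^\flat$-complete arc-cover property of $A \to B^0$ supplies the required lift. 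Applying Theorem~\ref{Theorem:descent for perfect complex for derived quotients} to this arc-cover (with base $R^\flat$ and ideal $((\varpi^\flat)^m)$), and observing that in the product-ring decomposition of $(B^0 \times A[1/\varpi^\flat])^{\otimes_A (n+1)}$ every factor containing a copy of $A[1/\varpi^\flat]$ has $\varpi^\flat$ inverted and therefore contributes trivially to $\Perf(-/^\L (\varpi^\flat)^m)$, leaves only the pure tensor factor $(B^0)^{\otimes_A (n+1)}$ whose perfection is $C^n$, with the correct cosimplicial structure, yielding the required equivalence.

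The hypercover version is obtained by iterating this augmentation level by level, and the Tor-amplitude variant follows from the $\Perf$-descent together with Lemma~\ref{Lemma:Tor-amplitude}(2), since the mod-$\varpi^\flat$ reduction $A/\varpi^\flat \to B^0/\varpi^\flat$ is an arc-cover of perfect $\F_p$-algebras (giving surjectivity of $\Spec \pi_0(B^0/^\L (\varpi^\flat)^m) \to \Spec \pi_0(A/^\L (\varpi^\flat)^m)$). The main obstacle I anticipate is carrying out the augmentation coherently at the hypercover level: to convert a general $\varpi^\flat$-complete arc-hypercover $B^\bullet$ into a genuine arc-hypercover of perfect $R^\flat$-algebras by attaching $A[1/\varpi^\flat]$-factors at each cosimplicial degree, one must verify that the matching-object condition remains an arc-cover and that the face and degeneracy maps respect the product decomposition so that all $A[1/\varpi^\flat]$-pieces drop out uniformly after reduction modulo $(\varpi^\flat)^m$ and only the $C^\bullet$-part remains.
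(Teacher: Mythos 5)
Your reduction to the tilted side via Proposition~\ref{Proposition:tilting and arc cover} and Lemma~\ref{Lemma:derived quotient of perfectoid}, and your treatment of the \v{C}ech case via the augmentation $A \to B^0 \times A[1/\varpi^\flat]$ (which is indeed an arc-cover of perfect $R^\flat$-algebras by the dichotomy you describe, and whose \v{C}ech conerve collapses modulo $(\varpi^\flat)^m$ to that of the completed \v{C}ech conerve), is exactly what the paper does. The gap, which you correctly flag as ``the main obstacle,'' is the passage from \v{C}ech covers to hypercovers. Your plan of ``iterating the augmentation level by level'' is not viable as stated: a $\varpi^\flat$-complete arc-hypercover is not built out of a single cover, and there is no obvious way to insert $A[1/\varpi^\flat]$-factors degree by degree while keeping the matching-object maps arc-covers, since the matching objects in the completed and uncompleted categories differ.

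The way around this is to not attempt direct hyperdescent for $\Perf$ at all. Instead, prove $\varpi^\flat$-complete arc-\emph{descent} (\v{C}ech descent only) for the truncated functors $A \mapsto \Perf_{[a,b]}(A/^\L(\varpi^\flat)^m)$. By Lemma~\ref{Lemma:Tor-amplitude}(1) this functor takes values in $N$-categories for a fixed $N$ depending only on $a,b,m$, and for functors landing in $N$-categories, arc-descent automatically upgrades to arc-hyperdescent --- this is precisely the reduction invoked in the first paragraph of the proof of Theorem~\ref{Theorem:descent for perfect complex for derived quotients}, and the corollary's proof simply refers back to it. Hyperdescent for the untruncated $\Perf(-/^\L(\varpi^\flat)^m)$ is then recovered from the truncated cases. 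Note also that your proposed logical order at the end is reversed: you cannot deduce the Tor-amplitude variant from the untruncated $\Perf$-descent, because you have no independent access to the latter; in the paper's argument the $\Perf_{[a,b]}$ statement is the one proved directly (for \v{C}ech covers) and the $\Perf$ statement is the consequence.
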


\begin{proof}
By Proposition \ref{Proposition:tilting and arc cover} and Lemma \ref{Lemma:derived quotient of perfectoid},
it suffices to prove that
the functors
\[
A \mapsto \Perf(A/^\L (\varpi^\flat)^m) \quad \text{and} \quad A \mapsto \Perf_{[a, b]}(A/^\L (\varpi^\flat)^m)
\]
from the category $\mathcal{C}_{R^\flat, \varpi^\flat}$ of $\varpi^\flat$-complete perfect $R^\flat$-algebras to $\Catinfty$ satisfy $\varpi^\flat$-complete $\arc$-hyperdescent.
Moreover, it is enough to prove that
the second functor satisfies
$\varpi$-complete $\arc$-descent; compare the first paragraph of the (first) proof of Theorem \ref{Theorem:descent for perfect complex for derived quotients}.
This functor preserves finite products.
Let $A \to B$ be a $\varpi^\flat$-complete $\arc$-cover in $\mathcal{C}_{R^\flat, \varpi^\flat}$.
Since $A \to C:=B \times A[1/{\varpi^\flat}]$ is an $\arc$-cover,
we obtain
\[
\Perf_{[a, b]}(A/^\L (\varpi^\flat)^m) \overset{\sim}{\to} \lim_{\Delta} \Perf_{[a, b]}(C^{\bullet}/^\L (\varpi^\flat)^m)
\]
by Theorem \ref{Theorem:descent for perfect complex for derived quotients}, where $A \to C^\bullet$ is the \v{C}ech conerve of $A \to C$ taken in the category of perfect $R^\flat$-algebras.
Let $A \to B^{\bullet}$ be the (completed) \v{C}ech conerve
of $A \to B$ taken in $\mathcal{C}_{R^\flat, \varpi^\flat}$.
We have
$C^{\bullet}/^\L (\varpi^\flat)^m \cong B^{\bullet}/^\L (\varpi^\flat)^m$, and thus
\[
\Perf_{[a, b]}(C^{\bullet}/^\L (\varpi^\flat)^m) \cong \Perf_{[a, b]}(B^{\bullet}/^\L (\varpi^\flat)^m).
\]
In conclusion, we have
$
\Perf_{[a, b]}(A/^\L (\varpi^\flat)^m) \overset{\sim}{\to} \lim_{\Delta} \Perf_{[a, b]}(B^{\bullet}/^\L (\varpi^\flat)^m),
$
which completes the proof.
\end{proof}

\subsection{The second proof of Theorem \ref{Theorem:descent for perfect complex for derived quotients}}\label{Subsection:The second proof}

In this subsection, we prove the following proposition, by which one can easily see that Theorem \ref{Theorem:descent for pefect rings} implies 
Theorem \ref{Theorem:descent for perfect complex for derived quotients}.

\begin{prop}\label{Proposition:reduction argument}
Let $A \to B^\bullet$ be an augmented cosimplicial object in $\Ring$ and $I=(x_1, \dotsc, x_r) \subset A$ a finitely generated ideal such that
$\Spec B^0/IB^0 \to \Spec A/I$
is surjective.
Let
$A/^\L(x_1, \dotsc, x_r) \to B^\bullet/^\L(x_1, \dotsc, x_r)$
be
the augmented cosimplicial object in $\CAlg$
obtained by base change.
If
$
\Perf(A) \overset{\sim}{\to} \lim_{\Delta} \Perf(B^{\bullet}),
$
then we have
\[
F \colon \Perf(A/^\L(x_1, \dotsc, x_r)) \overset{\sim}{\to} \lim_{\Delta} \Perf(B^{\bullet}/^\L(x_1, \dotsc, x_r)).
\]
\end{prop}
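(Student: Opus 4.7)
The plan is to bootstrap the required descent for $\Perf(A/^\L I)$ from the hypothesised descent for $\Perf(A)$ by realising perfect $A/^\L I$-modules as perfect $A$-modules equipped with a compatible $A/^\L I$-action, and then transporting this structure through the given equivalence. The central observation is that $A/^\L I$ is resolved by the Koszul complex on $x_1, \dotsc, x_r$ and is therefore itself a perfect $A$-module (indeed, a commutative algebra object of the symmetric monoidal $\infty$-category $\Perf(A)$); under the hypothesised $\Perf(A) \overset{\sim}{\to} \lim_\Delta \Perf(B^\bullet)$, it corresponds to the compatible family of commutative algebras $(B^n/^\L I)_n$ on the right-hand side.

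First I would identify $\Perf(A/^\L I)$ with the full subcategory of $\Mod(A/^\L I)$ spanned by those objects whose restriction of scalars to $\Mod(A)$ lies in $\Perf(A)$, and similarly for each $B^n$. One direction is immediate since $A/^\L I \in \Perf(A)$. For the converse, given $M \in \Mod(A/^\L I)$ with $M \in \Perf(A)$, note that $A/^\L I$ is derived $I$-complete (hence so is $M$), and that the Koszul computation
\[
A/^\L I \otimes_A^\L A/I \cong \bigoplus_{S \subseteq \{1, \dotsc, r\}} (A/I)[|S|]
\]
implies
\[
M \otimes_A^\L A/I \cong \bigoplus_{S \subseteq \{1, \dotsc, r\}} (M \otimes_{A/^\L I}^\L A/I)[|S|].
\]
Hence $M \otimes_{A/^\L I}^\L A/I$ is a direct summand of $M \otimes_A^\L A/I \in \Perf(A/I)$ and is therefore itself perfect over $A/I$. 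Proposition~\ref{Proposition:derived complete and reduction modulo I}~(1), applied with ambient $\E_\infty$-ring $A/^\L I$, then yields $M \in \Perf(A/^\L I)$.

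Next I would upgrade the hypothesised equivalence to one of symmetric monoidal $\infty$-categories: the comparison functor $M \mapsto (M \otimes_A^\L B^n)_n$ is tautologically symmetric monoidal, so an equivalence of the underlying $\infty$-categories suffices to promote it. Under this upgraded equivalence, $A/^\L I$ corresponds to $(B^n/^\L I)_n$ as commutative algebras, and a standard fact on modules over algebras in symmetric monoidal $\infty$-categories (cf.~\cite[Section 4.5]{HA}) induces an equivalence between the $\infty$-category of $A/^\L I$-module objects in $\Perf(A)$ and the limit $\lim_\Delta$ of the $\infty$-categories of $(B^n/^\L I)$-module objects in $\Perf(B^n)$. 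Combining this with Step~1 applied at both $A$ and each $B^n$ yields the desired equivalence $F$.

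The main obstacle I anticipate is the converse direction of the identification in Step~1, namely that an $A/^\L I$-module whose restriction to $A$ is perfect is already perfect over $A/^\L I$; this is precisely where the Koszul decomposition and Proposition~\ref{Proposition:derived complete and reduction modulo I}~(1) do the essential work. Once this identification is in hand, the transport of the algebra and module structure through the symmetric monoidal equivalence is a formal consequence of standard $\E_\infty$-algebra manipulations.
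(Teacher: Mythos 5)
Your proof is correct but takes a genuinely different route from the paper's. The paper proves full faithfulness by showing $\overline{A}:=A/^\L I$ lies in the subcategory where the unit map is an equivalence, bootstrapping from $\Perf(A)\overset{\sim}{\to}\lim\Perf(B^\bullet)$ via the fiber sequences $A/^\L(x_1,\dotsc,x_i)\xrightarrow{x_{i+1}}A/^\L(x_1,\dotsc,x_i)\to A/^\L(x_1,\dotsc,x_{i+1})$; and it proves essential surjectivity by taking $L=\Psi(\{K^\bullet\})$, checking $L$ is perfect over $A$, upgrading to \emph{almost} perfect over $\overline{A}$ (Lemma~\ref{Lemma:almost perfect push}), and then invoking \cite[Tag 068W]{SP} and \cite[2.7.3.2(d), 8.6.4.3]{SAG} together with the surjectivity of $\Spec B^0/IB^0\to\Spec A/I$. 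Your proof instead characterizes $\Perf(\overline{A})$ intrinsically as the full subcategory of $\Mod(\overline{A})$ of objects perfect over $A$, i.e.\ as $\overline{A}$-module objects in $\Perf(A)$, and then transports this description through the hypothesized equivalence upgraded to a symmetric monoidal one, using the formal fact that forming module categories over a compatible family of algebra objects commutes with limits of symmetric monoidal $\infty$-categories. This is structurally cleaner and, notably, does not use the surjectivity hypothesis at all (which in the paper's argument is only needed to finish the essential-surjectivity step), so your approach actually proves a marginally stronger statement.

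Two small inaccuracies to fix. First, the parenthetical ``$A/^\L I$ is derived $I$-complete (hence so is $M$)'' is not a valid inference: a module over a derived $I$-complete ring need not be derived $I$-complete (e.g.\ $\Q_p$ over $\Z_p$). The correct reason is stronger and uses the specific shape of $\overline{A}$: since $I$ kills $\pi_0(\overline{A})=A/I$, each homotopy group $\pi_n(M)$ of an $\overline{A}$-module is an $A/I$-module and hence $I$-torsion, and a complex all of whose homotopy groups are derived $I$-complete is derived $I$-complete (\cite[Tag 091P]{SP}). Second, the ``standard fact'' you invoke in Step~2 --- that $\Mod_R(\mathcal{C})\simeq\lim_n\Mod_{R^n}(\mathcal{D}^n)$ when $\mathcal{C}\simeq\lim_n\mathcal{D}^n$ as symmetric monoidal $\infty$-categories and $R$ corresponds to $(R^n)_n$ --- is not really the content of \cite[Section 4.5]{HA}; it follows by applying the limit-preservation of $\infty$-operad algebras (e.g.\ \cite[Lemma 3.2.2.6, Corollary 3.2.2.5]{HA}) to the $\mathcal{LM}$-operad to get $\mathrm{LMod}(\mathcal{C})\simeq\lim_n\mathrm{LMod}(\mathcal{D}^n)$ compatibly with the projections to algebra objects, and then passing to fibers over $R\leftrightarrow(R^n)_n$. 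With these two points tightened, the argument is complete.
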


\begin{proof}
To simplify the notation, we write
\[
\overline{A}:=A/^\L(x_1, \dotsc, x_r) \quad \text{and} \quad \overline{B}^n:=B^n/^\L(x_1, \dotsc, x_r).
\]
Let
\[
\Phi \colon \Mod(\overline{A}) \to \lim_{\Delta} \Mod(\overline{B}^{\bullet})
\]
denote the natural functor.
This functor $\Phi$ admits a right adjoint
\[
\Psi \colon \lim_{\Delta} \Mod(\overline{B}^{\bullet}) \to \Mod(\overline{A}).
\]
An object $\{ K^\bullet \}$ of $\lim_{\Delta} \Mod(\overline{B}^{\bullet})$ gives rise to a natural functor $\Delta \to \Mod(\overline{A})$
such that the image $K^n$ of $[n] \in \Delta$ is isomorphic to that of $\{ K^\bullet \}$ in $\Mod(\overline{B}^{n})$ regarded as an $\overline{A}$-module.
The functor $\Psi$ sends $\{ K^\bullet \}$ to $\lim_{[n] \in \Delta} K^n$.

We first prove the fully faithfulness of $F$.
Let $\mathcal{D} \subset \Mod(\overline{A})$ be the full subcategory spanned by those objects $K$ such that
the unit map $K \to (\Psi \circ \Phi)(K)$ is an isomorphism.
It is clear that $\mathcal{D}$ is a stable subcategory which is closed under retracts.
Using the fully faithfulness of
$
\Perf(A) \to \lim_{\Delta} \Perf(B^{\bullet})
$
and using the fiber sequences
\[
A/^\L(x_1, \dotsc, x_i) \overset{\times x_{i+1}}{\to} A/^\L(x_1, \dotsc, x_i) \to A/^\L(x_1, \dotsc, x_{i+1})
\]
for $0 \leq i \leq r-1$ inductively, where $A/^\L(x_1, \dotsc, x_i):=A$ if $i=0$, we see that $\overline{A} \in \mathcal{D}$.
Thus, by the definition of $\Perf(\overline{A})$, we have
$
\Perf(\overline{A}) \subset \mathcal{D}.
$
In particular, it follows that $F$ is fully faithful.

We next prove that $F$ is essentially surjective.
Let $\{ K^\bullet \}$ be an object of $\lim_{\Delta} \Perf(\overline{B}^{\bullet})$
and
let $K^n \in \Perf(\overline{B}^n)$ denote its image.
It suffices to prove that
$L:=\Psi(\{ K^\bullet \})$ belongs to $\Perf(\overline{A})$ and the canonical map
$L \otimes^\L_{\overline{A}} \overline{B}^n \to K^n$
is an isomorphism.
(See also the proof of Proposition \ref{Proposition:derived complete and reduction modulo I} (2).)
Since $\overline{B}^n$ is perfect as a $B^n$-module,
it follows that $K^n$ is perfect over $B^n$ as well.
Thus, by our assumption (and a similar argument as in the previous paragraph), we see that $L$ is perfect over $A$ and we have
$
L \otimes^\L_{\overline{A}} \overline{B}^n \cong L \otimes^\L_{A} B^n \cong K^n.
$
It remains to show that $L$ is perfect over $\overline{A}$.
For this, we remark that,
since $\overline{A}$ is perfect as an $A$-module,
the $\overline{A}$-module $L$ is almost perfect in the sense of \cite[Definition 7.2.4.10]{HA} by Lemma \ref{Lemma:almost perfect push} below.
Therefore, by \cite[Tag 068W]{SP} and \cite[Proposition 2.7.3.2 (d)]{SAG} (see also \cite[Corollary 8.6.4.3]{SAG}),
it suffices to show that
$L \otimes^\L_{\overline{A}} \pi_0(\overline{A})/\mathfrak{m}$
is perfect over $\pi_0(\overline{A})/\mathfrak{m}$ for every maximal ideal $\mathfrak{m} \subset \pi_0(\overline{A})$.
This follows from the fact that
$L \otimes^\L_{\overline{A}} \overline{B}^0 \cong K^0$
is perfect over $\overline{B}^0$ and
the assumption that
$\Spec B^0/IB^0 \to \Spec A/I$
is surjective.
\end{proof}

The following easy lemma is used in the proof of Proposition \ref{Proposition:reduction argument}.

\begin{lem}\label{Lemma:almost perfect push}
Let $A \to B$ be a map of connective $\E_\infty$-rings.
Assume that $B$ is almost perfect as an $A$-module (in the sense of \cite[Definition 7.2.4.10]{HA}).
Then a $B$-module $M$ is almost perfect if and only if $M$ is almost perfect as an $A$-module.
\end{lem}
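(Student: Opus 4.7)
The plan is to argue the two implications separately.

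For the forward implication, that an almost perfect $B$-module is almost perfect as an $A$-module, I would observe that the full subcategory of $\Mod(B)$ consisting of those $B$-modules which are almost perfect as $A$-modules is closed under finite colimits, retracts, and shifts $[m]$ with $m \geq 0$ by the standard closure properties of almost perfect modules (\cite[Section 7.2.4]{HA}), and contains $B$ by hypothesis; therefore it contains every perfect $B$-module. If $M$ is almost perfect over $B$, then for each $n$ one may choose a perfect $B$-module $P$ and a map $P \to M$ with $n$-connective fiber, and such a $P$ is then almost perfect over $A$; composing with a perfect $A$-module approximation $Q \to P$ up to $n$-connectivity produces a perfect $A$-module $Q$ mapping to $M$ with $n$-connective fiber, witnessing $M$ as almost perfect over $A$.

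For the reverse implication, suppose $M \in \Mod(B)$ is almost perfect as an $A$-module; after shifting we may assume $M$ is connective. The plan is to construct inductively a sequence $P_0 \to P_1 \to \cdots \to M$ of perfect $B$-modules such that the fiber $F_n$ of $P_n \to M$ is $n$-connective and is itself almost perfect as an $A$-module. For the base case, $\pi_0(M)$ is finitely generated as a $\pi_0(A)$-module, hence as a $\pi_0(B)$-module; lifting a finite generating set gives $P_0 = B^{k_0} \to M$ surjective on $\pi_0$, so $F_0$ is connective. Inductively, $\pi_n(F_n)$ is finitely generated over $\pi_0(A)$ (since $F_n$ is $n$-connective and almost perfect over $A$), hence over $\pi_0(B)$; lifting a finite generating set yields $\alpha \colon B^{k_{n+1}}[n] \to F_n$ surjective on $\pi_n$, and setting $P_{n+1} := \mathrm{cofib}(B^{k_{n+1}}[n] \to P_n)$ provides a map $P_{n+1} \to M$ whose fiber $F_{n+1} \simeq \mathrm{cofib}(\alpha)$ is $(n+1)$-connective.

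The key technical point at each iteration is that $F_{n+1}$ remains almost perfect as an $A$-module, and I expect this to be the main (though entirely formal) obstacle. It follows from the cofiber sequence $B^{k_{n+1}}[n] \to F_n \to F_{n+1}$ together with the closure of almost perfect $A$-modules under finite colimits and shifts $[m]$ with $m \geq 0$: indeed, $B^{k_{n+1}}[n]$ is such a shift of the almost perfect $A$-module $B^{k_{n+1}}$, and $F_n$ is almost perfect over $A$ by the inductive hypothesis. Once the sequence is constructed, each $P_n \to M$ is precisely the perfect $B$-module approximation of $M$ up to $n$-connectivity required for $M$ to be almost perfect over $B$, completing the proof.
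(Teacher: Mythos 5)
Your proof is correct and, on the substantive ``if'' direction, follows essentially the same strategy as the paper: both arguments proceed by repeatedly surjecting a finite free $B$-module onto the lowest nonvanishing homotopy group, passing to the fiber, and using the closure properties of almost perfect $A$-modules to continue the induction. The paper packages this via the SAG notion ``perfect to order $n$'' and cites \cite[Proposition 2.7.3.3]{SAG} for the easy ``only if'' direction, whereas you carry the definition of almost perfect through directly and prove ``only if'' by hand; the difference is cosmetic rather than structural. Two minor points worth tightening in a final write-up: in the ``only if'' direction, the subcategory of $\Mod(B)$ you consider must be a \emph{stable} subcategory in order to contain $\Perf(B)$, so one needs closure under \emph{all} shifts rather than only $[m]$ with $m \ge 0$ (this does hold, since almost perfect modules remain almost connective under negative shifts); and in the ``if'' direction the induction needs the base-case fact that $F_0$ is almost perfect over $A$, which follows from the fiber sequence $F_0 \to B^{\oplus k_0} \to M$ and the hypothesis on $B$ and $M$.
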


\begin{proof}
Let $n \geq 0$ be an integer.
We prove that a connective $B$-module $M$ is perfect to order $n$ in the sense of \cite[Definition 2.7.0.1]{SAG} if and only if $M$ is perfect to order $n$ as an $A$-module, from which the lemma follows (see \cite[Remark 2.7.0.2]{SAG}).
The ``only if'' direction is proved in \cite[Proposition 2.7.3.3]{SAG}.
The ``if'' direction can be proved in a similar way; we provide the proof for the reader's convenience.
(See also \cite[Tag 064Z]{SP} for the case where $A$ and $B$ are discrete.)

We proceed by induction on $n$.
Since a connective $B$-module $M$ is perfect to order $0$ if and only if $\pi_0(M)$ is finitely generated over $\pi_0(B)$ by \cite[Proposition 2.7.2.1]{SAG}, the case where $n=0$ is clear.
Assume $n>0$.
Since $\pi_0(M)$ is finitely generated over $\pi_0(B)$, there exists a map
$f \colon B^{\oplus m} \to M$ of $B$-modules such that the induced map $\pi_0(B^{\oplus m}) \to \pi_0(M)$ is surjective.
Let $N$ be a fiber of $f$, which is a connective $B$-module.
Since both $B$ and $M$ are perfect to $n$ as $A$-modules, we see that $N$ is perfect to $n-1$ over $A$
by \cite[Remark 2.7.0.7]{SAG}, and hence it is perfect to $n-1$ over $B$ by the induction hypothesis.
By \cite[Remark 2.7.0.7]{SAG} again, it follows that $M$ is perfect to $n$ over $B$.
\end{proof}

\section{Proofs of main results}\label{Section:Proofs of main results}

\subsection{Proof of Theorem \ref{Theorem:descent for perfectoid}}\label{Subsection:Proof of Theorem:descent for perfectoid}

We prove Theorem \ref{Theorem:descent for perfectoid}.
By Proposition \ref{Proposition:derived complete and reduction modulo I} (2) and Corollary \ref{Corollary:descent for derived quotient of perfectoid},
the functor $A \mapsto \Perf(A^\flat)$ satisfies $\varpi$-complete $\arc$-hyperdescent.

We prove the assertions for the functors
$A \mapsto \Perf(W(A^\flat))$
and
$A \mapsto \Perf(A)$
simultaneously.
For this,
let 
$\mathcal{G}$
denote
the functor $A \mapsto W(A^\flat)$ or $A \mapsto A$
from $\mathcal{C}_{R, \varpi}$ to $\Ring$,
and let $\alpha \in \mathcal{G}(A)$ be $p$ or $\varpi$, respectively.
Then, for every integer $m \geq 1$, let
$\mathcal{G}_m \colon \mathcal{C}_{R, \varpi} \to \CAlg$ be the functor defined by $A \mapsto \mathcal{G}(A)/^\L \alpha^m$.
By Proposition \ref{Proposition:derived complete and reduction modulo I} (2),
it suffices to prove that the functor
$A \mapsto \Perf(\mathcal{G}_m(A))$
satisfies $\varpi$-complete $\arc$-hyperdescent for every integer $m \geq 1$.
This functor preserves finite products.
We want to show that the functor
\[
F \colon \Perf(\mathcal{G}_m(A)) \to \lim_{\Delta} \Perf(\mathcal{G}_m(B^\bullet))
\]
is an equivalence
for any $\varpi$-complete $\arc$-hypercover $A \to B^\bullet$.
We first prove the fully faithfulness of $F$.
The natural functor
\[
\Phi_m \colon \Mod(\mathcal{G}_m(A)) \to \lim_{\Delta} \Mod(\mathcal{G}_m(B^\bullet))
\]
admits a right adjoint
\[
\Psi_m \colon \lim_{\Delta} \Mod(\mathcal{G}_m(B^\bullet)) \to \Mod(\mathcal{G}_m(A)).
\]

\begin{lem}\label{Lemma:fully fatihful}
Let $\mathcal{D} \subset \Mod(\mathcal{G}_m(A))$ be the full subcategory spanned by those objects $K$ such that
the unit map $K \to (\Psi_m \circ \Phi_m)(K)$ is an isomorphism.
Then we have $\Perf(\mathcal{G}_m(A)) \subset \mathcal{D}$.
In particular, the functor $F$ is fully faithful.
\end{lem}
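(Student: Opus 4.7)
The plan is a standard devissage: first reduce the containment $\Perf(\mathcal{G}_m(A)) \subset \mathcal{D}$ to a single assertion about the generator, and then verify that assertion by induction on $m$. To begin, I would observe that $\mathcal{D}$ is a stable subcategory of $\Mod(\mathcal{G}_m(A))$ closed under retracts: the functors $\Phi_m$ and $\Psi_m$ preserve cofibers and retracts respectively, and equivalences in a stable $\infty$-category are detected by the long exact sequence on homotopy groups. Since $\Perf(\mathcal{G}_m(A))$ is by definition the smallest such subcategory containing $\mathcal{G}_m(A)$, the containment reduces to the single claim $\mathcal{G}_m(A) \in \mathcal{D}$, i.e.\ that the unit map $\mathcal{G}_m(A) \to \lim_{[n]\in\Delta}\mathcal{G}_m(B^n)$ is an equivalence. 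The ``in particular'' clause then follows from adjointness and Yoneda: for $K, L \in \Perf(\mathcal{G}_m(A))$, one has $\Map(\Phi_m K, \Phi_m L) \simeq \Map_{\mathcal{G}_m(A)}(K, \Psi_m\Phi_m L) \simeq \Map_{\mathcal{G}_m(A)}(K, L)$.

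The crucial auxiliary fact I would establish is that, for every $n \leq m$, there is a natural equivalence
\[
\mathcal{G}_n(A) \otimes^\L_{\mathcal{G}_m(A)} \mathcal{G}_m(B^\bullet) \simeq \mathcal{G}_n(B^\bullet),
\]
since both sides compute the pushout in $\CAlg$ of $\mathcal{G}_n(A) \leftarrow \mathcal{G}(A) \to \mathcal{G}(B^\bullet)$ (using $\mathcal{G}_m(B) \simeq \mathcal{G}(B) \otimes^\L_{\mathcal{G}(A)} \mathcal{G}_m(A)$). This identity implies that the unit of $\Phi_m \dashv \Psi_m$ evaluated at a $\mathcal{G}_n(A)$-module $K$ (viewed as a $\mathcal{G}_m(A)$-module via restriction along the natural map $\mathcal{G}_m(A) \to \mathcal{G}_n(A)$) coincides with the unit of the analogous adjunction $\Phi_n \dashv \Psi_n$ at $K$. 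In particular, if $\mathcal{G}_n(A)$ lies in the analogue $\mathcal{D}_n$ at level $n$, then it lies in $\mathcal{D} = \mathcal{D}_m$ as well.

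For the base case $m = 1$, one has $\mathcal{G}_1(A) = W(A^\flat)/p = A^\flat$ in the Witt case (using that $p$ is a nonzerodivisor on $W(A^\flat)$) and $\mathcal{G}_1(A) = A/^\L\varpi$ in the identity case. In both cases the desired equivalence of unit maps is the endomorphism-spectrum shadow of the $\varpi$-complete $\arc$-hyperdescent of $A \mapsto \Perf(A^\flat)$ established in the first paragraph of the present proof and of Corollary \ref{Corollary:descent for derived quotient of perfectoid}, respectively. For the inductive step, I would exploit the fiber sequence
\[
\mathcal{G}(A)/^\L\alpha \longrightarrow \mathcal{G}(A)/^\L\alpha^m \longrightarrow \mathcal{G}(A)/^\L\alpha^{m-1}
\]
in $\Mod(\mathcal{G}_m(A))$, obtained from the morphism of cofiber sequences $\mathcal{G}(A) \xrightarrow{\alpha^j} \mathcal{G}(A) \to \mathcal{G}(A)/^\L\alpha^j$ for $j = m, m-1$ whose left vertical component is multiplication by $\alpha$ and middle vertical is the identity; the $3\times 3$-lemma then identifies the fiber of the induced map on cofibers with $\mathcal{G}(A)/^\L\alpha$. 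The base case and the compatibility give $\mathcal{G}_1(A) \in \mathcal{D}$, while the inductive hypothesis and the compatibility give $\mathcal{G}_{m-1}(A) \in \mathcal{D}$; stability of $\mathcal{D}$ then yields $\mathcal{G}_m(A) \in \mathcal{D}$.

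I do not foresee a serious obstacle: the proof is essentially formal once the fiber sequence and the base-change identity are granted. The only mild subtlety is verifying that the fiber sequence genuinely lives in $\Mod(\mathcal{G}_m(A))$ rather than merely in $\Mod(\mathcal{G}(A))$ (each term carries a canonical $\mathcal{G}_m(A)$-algebra structure coming from the Koszul description, and the maps are linear for these structures), and keeping track that the two units agree through the restriction-of-scalars functor; but both points are standard consequences of the description $\mathcal{G}_m(-) = \mathcal{G}(-) \otimes^\L_{\Z[X]} \Z$ with $X \mapsto \alpha^m$.
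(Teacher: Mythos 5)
Your proof is correct and follows essentially the same devissage as the paper: reduce to showing $\mathcal{G}_m(A) \in \mathcal{D}$, handle $m=1$ via the descent for $A \mapsto \Perf(A^\flat)$ and Corollary \ref{Corollary:descent for derived quotient of perfectoid}, and induct on $m$ using a Koszul-type fiber sequence among the $\mathcal{G}_j(A)$. The only cosmetic differences are that the paper uses the fiber sequence $\mathcal{G}_{m-1}(A) \xrightarrow{\times\alpha} \mathcal{G}_m(A) \to \mathcal{G}_1(A)$ rather than your dual $\mathcal{G}_1(A) \to \mathcal{G}_m(A) \to \mathcal{G}_{m-1}(A)$ (either works), and your explicit base-change identity $\mathcal{G}_n(A)\otimes^\L_{\mathcal{G}_m(A)}\mathcal{G}_m(B^\bullet)\simeq\mathcal{G}_n(B^\bullet)$ spells out a unit-compatibility across levels that the paper uses tacitly when invoking the induction hypothesis.
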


\begin{proof}
As in the proof of Proposition \ref{Proposition:reduction argument},
it suffices to show
$\mathcal{G}_m(A) \in \mathcal{D}$.
We proceed by induction on $m$.
Assume $m=1$.
In the beginning of this section and Corollary \ref{Corollary:descent for derived quotient of perfectoid},
we have proved that the functor $F$ is an equivalence (and in particular fully faithful), which implies the result.
When $m > 1$, the result follows from the fiber sequence
\begin{equation}\label{equation:inductive fiber sequence}
\mathcal{G}_{m-1}(A) \overset{\times \alpha}{\to} \mathcal{G}_m(A) \to \mathcal{G}_1(A)
\end{equation}
and the induction hypothesis.
\end{proof}

We shall prove that $F$ is essentially surjective.
Again, we proceed by induction on $m$.
The case $m=1$ has already been established in the beginning of the proof and Corollary \ref{Corollary:descent for derived quotient of perfectoid}.
Assume $m >1$.
Let $\{ K^\bullet_m \}$ be an object of $\lim_{\Delta} \Perf(\mathcal{G}_m(B^\bullet))$
and 
let $K^n_m \in \Perf(\mathcal{G}_m(B^n))$ denote its image.
It suffices to prove that
$L_m:=\Psi_m(\{ K^\bullet_m \})$ belongs to $\Perf(\mathcal{G}_m(A))$ and the canonical map
\begin{equation}\label{equation:desired isomorphism}
    L_m \otimes^\L_{\mathcal{G}_m(A)} \mathcal{G}_m(B^n) \to K^n_m
\end{equation}
is an isomorphism.
(See also the proofs of Proposition \ref{Proposition:derived complete and reduction modulo I} (2) and Proposition \ref{Proposition:reduction argument}.)

By the induction hypothesis,
the image $\{ K^\bullet_l \}$ of $\{ K^\bullet_m \}$ in $\lim_{\Delta} \Perf(\mathcal{G}_l(B^\bullet))$
corresponds to a perfect module
$L_l \in \Perf(\mathcal{G}_l(A))$
for every $l \leq m-1$.
By Lemma \ref{Lemma:fully fatihful}, we have
$L_l \cong \Psi_l(\{ K^\bullet_l \})$.
We claim that $L_m$ is almost connective and
\begin{equation}\label{equation:isom special case}
L_m\otimes^\L_{\mathcal{G}_m(A)} \mathcal{G}_1(A) \overset{\sim}{\to} L_1.
\end{equation}
Our claim
together with Proposition \ref{Proposition:derived complete and reduction modulo I} (1)
shows that $L_m \in \Perf(\mathcal{G}_m(A))$.
It also implies that
the map $(\ref{equation:desired isomorphism})$ is an isomorphism.
Indeed,
the base change of $(\ref{equation:desired isomorphism})$ along $\mathcal{G}_m(B^n) \to \mathcal{G}_1(B^n)$
is an isomorphism since it can be identified with the base change of $(\ref{equation:isom special case})$ along
$\mathcal{G}_1(A) \to \mathcal{G}_1(B^n)$.
Then, using $(\ref{equation:inductive fiber sequence})$ repeatedly, we conclude that the map $(\ref{equation:desired isomorphism})$ is also an isomorphism.

We shall prove our claim.
Using $(\ref{equation:inductive fiber sequence})$ and the functor $\Psi_m$,
we obtain a fiber sequence
$
L_{m-1} \to L_m \to L_1.
$
Since $L_{m-1}$ and $L_1$ are almost connective, it follows that $L_m$ is also almost connective.
It remains to show $(\ref{equation:isom special case})$.
We have a natural fiber sequence
$\mathcal{G}_{1}(A) \to \mathcal{G}_m(A) \to \mathcal{G}_{m-1}(A)$.
Moreover, from the following fiber sequence
\begin{equation}\label{equation:4-4}
    \mathcal{G}(A) \overset{\times \alpha}{\to} \mathcal{G}(A) \to \mathcal{G}_1(A),
\end{equation}
we obtain a fiber sequence
$\mathcal{G}_{m}(A) \to \mathcal{G}_m(A) \to \mathcal{G}_m(A) \otimes^\L_{\mathcal{G}(A)} \mathcal{G}_1(A)$.
Then, $(\ref{equation:inductive fiber sequence})$ and these fiber sequences induce a fiber sequence
\begin{equation}\label{equation:core fiber sequence}
\mathcal{G}_1(A)[1] \to \mathcal{G}_m(A)\otimes^\L_{\mathcal{G}(A)} \mathcal{G}_1(A) \to \mathcal{G}_1(A),
\end{equation}
where $[1]$ denotes the shift by $1$.
From $(\ref{equation:core fiber sequence})$,
we have a natural map of fiber sequences
\[
\xymatrix{
L_m\otimes^\L_{\mathcal{G}_m(A)} \mathcal{G}_1(A)[1] \ar^-{}[r]  \ar[d]_-{} & L_m \otimes^\L_{\mathcal{G}(A)} \mathcal{G}_1(A) \ar[d]_-{\id} \ar^-{}[r] & L_m\otimes^\L_{\mathcal{G}_m(A)} \mathcal{G}_1(A) \ar[d]_-{} \\
L_1[1] \ar[r]^-{} & L_m \otimes^\L_{\mathcal{G}(A)} \mathcal{G}_1(A) \ar^-{}[r] & L_1.
}
\]
The first line is obtained by applying $L_m \otimes^\L_{\mathcal{G}_m(A)} -$ to $(\ref{equation:core fiber sequence})$.
The second line is obtained by applying $K^n_m \otimes^\L_{\mathcal{G}_m(A)} -$ to $(\ref{equation:core fiber sequence})$ and then using the functor $\Psi_m$.
Here we use the fact that the functor
$- \otimes^\L_{\mathcal{G}(A)} \mathcal{G}_1(A) \colon \Mod(\mathcal{G}(A)) \to \Mod(\mathcal{G}_1(A))$ preserves limits (which can be checked using the fiber sequence $(\ref{equation:4-4})$).
We shall prove that the map
$L_m\otimes^\L_{\mathcal{G}_m(A)} \mathcal{G}_1(A) \to L_1$
induces an isomorphism on the $k$-th homotopy groups for every $k$.
Since $L_m$ is almost connective, so is $L_m\otimes^\L_{\mathcal{G}_m(A)} \mathcal{G}_1(A)$.
In particular, the assertion holds for a sufficiently small $k$.
Then, by induction on $k$ and using the above diagram,
we see that the assertion holds for all $k$.
This concludes the proof of $(\ref{equation:isom special case})$ and hence the proof of the essential surjectivity of $F$.

The proof of Theorem \ref{Theorem:descent for perfectoid} is now complete.

\subsection{$\varpi$-complete $\arc$-descent for finite projective modules}\label{Subsection:Descent for finite projective modules}

For future reference, we record a $\varpi$-complete $\arc$-descent result for finite projective modules over perfectoid rings, which is a consequence of Theorem \ref{Theorem:descent for perfectoid}.

We first fix some notation.
Let $A \to B^\bullet$ be an augmented cosimplicial object in $\Ring$.
For any $0 \leq i \leq 1$,
we denote by $p_i \colon B^0 \to B^1$ the map corresponding to the injection $[0]\cong \{i \} \hookrightarrow [1]$ in $\Delta$.
Similarly, for any $0 \leq i < j \leq 2$,
we denote by $p_{i, j} \colon B^1 \to B^2$ the map corresponding to the injection $[1] \cong \{i, j \}  \hookrightarrow [2]$ in $\Delta$.
Let $\Vect(A)$ be the category of finite projective modules over $A$ and $\DD(B^\bullet)$ the category of pairs $(M, \sigma)$ where $M \in \Vect(B^0)$ is a finite projective module over $B^0$ and $\sigma \colon p^*_0 M \overset{\sim}{\to} p^*_1 M$ is
an isomorphism in $\Vect(B^1)$ satisfying the
usual cocycle condition
$p^*_{0, 2}\sigma=p^*_{1, 2}\sigma \circ p^*_{0, 1}\sigma$.
Here for a map of commutative rings $f \colon R \to S$ and a module $M$ over $R$, we denote by $f^*M:=M\otimes_A B$ the base change of $M$ along $f$.
We have a natural functor
\[
\Vect(A) \to \DD(B^\bullet).
\]

\begin{cor}\label{Corollary:finite projective modules}
Let $R$ be a perfectoid ring and $\varpi \in R$ an element with $p \in (\varpi^p)$ such that $R$ is $\varpi$-complete.
Let $A \to B^\bullet$ be a $\varpi$-complete $\arc$-hypercover in $\mathcal{C}_{R, \varpi}$.
Then we have the equivalences of categories
\[
\Vect(A^\flat) \overset{\sim}{\to} \DD((B^\bullet)^\flat), \quad \Vect(W(A^\flat)) \overset{\sim}{\to} \DD(W((B^\bullet)^\flat)), \quad \text{and} \quad \Vect(A) \overset{\sim}{\to} \DD(B^\bullet).
\]
\end{cor}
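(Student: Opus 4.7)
The strategy is to deduce the corollary from Theorem \ref{Theorem:descent for perfectoid} by identifying the category $\Vect(S)$ of finite projective modules over a commutative ring $S$ with the full subcategory $\Perf_{[0,0]}(S) \subset \Perf(S)$ of perfect complexes with Tor-amplitude in $[0,0]$. Indeed, such a complex is quasi-isomorphic to a finitely generated projective module concentrated in degree zero, and Lemma \ref{Lemma:Tor-amplitude}~(1) (applied with $r=1$, $a=b=0$) guarantees that $\Perf_{[0,0]}(S)$ is equivalent to an ordinary $1$-category.

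Step 1 is to upgrade the three hyperdescent equivalences of Theorem \ref{Theorem:descent for perfectoid} to their Tor-amplitude $[0,0]$ versions, by restricting both sides to the full subcategories consisting of objects whose image in each $\Perf(\cdot)$ appearing in the \v{C}ech conerve has Tor-amplitude in $[0,0]$. The essential point is that if $K \in \Perf(A)$ satisfies $K \otimes^\L_A B^0 \in \Perf_{[0,0]}(B^0)$, then $K$ itself lies in $\Perf_{[0,0]}(A)$. By Lemma \ref{Lemma:Tor-amplitude}~(2) this reduces to checking that $\Spec B^0 \to \Spec A$ is surjective, which holds because the defining property of a $\varpi$-complete $\arc$-cover, applied to the residue field at an arbitrary prime of $A$ (viewed as a rank-$0$, hence trivially $\varpi$-complete, valuation ring), lifts any prime of $A$ to one of $B^0$. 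The $A^\flat$-version reduces to the same argument via Proposition \ref{Proposition:tilting and arc cover}. For the $W(A^\flat)$-version, where the analogous surjectivity on $\Spec$ is not as transparent, one uses that $W(A^\flat)$ is $p$-complete and applies Proposition \ref{Proposition:derived complete and reduction modulo I}~(1) with $I=(p)$: this detects Tor-amplitude after reduction modulo $p$, where $W(A^\flat)/^\L p \cong A^\flat$, thereby cascading to the already-handled $A^\flat$-case.

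Step 2 is to translate the resulting $\Catinfty$-valued hyperdescent into the naive descent-data category $\DD$. Since each of $\Perf_{[0,0]}((B^\bullet)^\flat)$, $\Perf_{[0,0]}(W((B^\bullet)^\flat))$, and $\Perf_{[0,0]}(B^\bullet)$ is termwise a $1$-category, the limit $\lim_\Delta$ in $\Catinfty$ is itself a $1$-category and agrees with the standard descent-data category: an object is a pair $(M,\sigma)$ with $M$ in the $0$th term and $\sigma \colon p_0^* M \overset{\sim}{\to} p_1^* M$ in the $1$st term satisfying the usual cocycle condition in the $2$nd term. Combining this with the identification $\Perf_{[0,0]} \cong \Vect$ on each level then yields the three equivalences asserted in the corollary.

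The principal obstacle is the $W(A^\flat)$-case of Step 1, where a direct surjectivity argument on $\Spec W((B^0)^\flat) \to \Spec W(A^\flat)$ is awkward; the cascade via Proposition \ref{Proposition:derived complete and reduction modulo I}~(1) circumvents this by transferring the question modulo $p$ to the already-settled $A^\flat$-version. Once Step 1 is in place, Step 2 is routine $1$-categorical bookkeeping.
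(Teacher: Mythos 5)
Your overall strategy — identify $\Vect(S)$ with $\Perf_{[0,0]}(S)$, restrict the hyperdescent equivalences of Theorem \ref{Theorem:descent for perfectoid} to Tor-amplitude $[0,0]$, and then unwind the cosimplicial limit into the $1$-categorical descent category $\DD$ via the truncation to $\Delta_{s,\leq 2}$ — matches the paper's proof, and Step 2 is essentially identical.

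However, Step 1 has a genuine gap in the $A$ and $A^\flat$ cases. You invoke Lemma \ref{Lemma:Tor-amplitude}~(2) by claiming $\Spec B^0 \to \Spec A$ is surjective, and you justify this by feeding ``the residue field at an arbitrary prime of $A$, a rank-$0$, hence trivially $\varpi$-complete, valuation ring'' into the defining property of a $\varpi$-complete $\arc$-cover. But a field $\kappa$ is $\varpi$-complete only if the image of $\varpi$ in $\kappa$ is zero; if $\varpi$ is a unit in $\kappa$, then $\kappa/\varpi^n\kappa=0$ for all $n$ and the completion vanishes. Since $A$ is $\varpi$-complete (so $\varpi$ lies in the Jacobson radical but certainly not in every prime — think of the generic point of $\Spec\O_C$), there are plenty of primes $\mathfrak{p}$ with $\varpi\notin\mathfrak{p}$, and for those $\kappa(\mathfrak{p})$ is not a legitimate test object. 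Your argument therefore only produces surjectivity over $V(\varpi)\subset\Spec A$, which is not what Lemma \ref{Lemma:Tor-amplitude}~(2) needs when applied to $A\to B^0$ directly.

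The fix is exactly the device you already deploy for $W(A^\flat)$: use Proposition \ref{Proposition:derived complete and reduction modulo I}~(1) uniformly for all three rings, not just for $W(A^\flat)$. Since $A$ is derived $\varpi$-complete and every $K\in\Perf(A)$ is automatically derived $\varpi$-complete, Proposition \ref{Proposition:derived complete and reduction modulo I}~(1) with $I=(\varpi)$ reduces the detection of Tor-amplitude $[0,0]$ to the quotient $A/\varpi A$ (and likewise $B^0/\varpi B^0$). There the map $A/\varpi A \to B^0/\varpi B^0$ is a genuine $\arc$-cover — reduction modulo $\varpi$ of a $\varpi$-complete $\arc$-cover — so $\Spec B^0/\varpi B^0 \to \Spec A/\varpi A$ is surjective: now every residue field really does have $\varpi=0$ and is $\varpi$-complete, so the defining property applies, and Lemma \ref{Lemma:Tor-amplitude}~(2) does the rest. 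This is precisely how the paper proceeds (citing Theorem \ref{Theorem:descent for perfectoid}, Lemma \ref{Lemma:Tor-amplitude}~(2), and Proposition \ref{Proposition:derived complete and reduction modulo I}~(1) together), and it treats $A$, $A^\flat$, and $W(A^\flat)$ uniformly rather than giving $W(A^\flat)$ special handling.
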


\begin{proof}
We only prove $\Vect(A) \overset{\sim}{\to} \DD(B^\bullet)$; the other statements can be proved similarly.
First, note that $\Vect(A) \cong \Perf_{[0, 0]}(A)$.
By Theorem \ref{Theorem:descent for perfectoid}, Lemma \ref{Lemma:Tor-amplitude} (2), and Proposition \ref{Proposition:derived complete and reduction modulo I} (1), we have
$
\Vect(A) \overset{\sim}{\to} \lim_{\Delta} \Vect(B^\bullet).
$
Let $\Delta_{s, \leq n}$ be the subcategory of $\Delta$ whose objects are $[m]=\{0, 1, \dotsc, m \}$ for $0 \leq m \leq n$ and morphisms are given by injective order preserving maps.
Since $\Vect(B^n)$ is a $1$-category for any $n$, it follows that
$\lim_{\Delta} \Vect(B^\bullet) \cong \lim_{\Delta_{s, \leq 2}} \Vect(B^\bullet)$; see, for instance, \cite[Proposition 4.3.5]{DAGVIII}.
(Here we use the dual of \cite[Proposition 4.3.5]{DAGVIII}. Although it is claimed there that the geometric realization of $X_\bullet$ is isomorphic to a colimit of the diagram $X_\bullet\vert_{\Delta^{op}_{s, \leq n+1}}$,
its proof shows that it is also isomorphic to a colimit of the diagram $X_\bullet\vert_{\Delta^{op}_{s, \leq n}}$; compare \cite[Lemma 5.5.6.17]{HTT}.)
One can check that
$\DD(B^\bullet) \cong \lim_{\Delta_{s, \leq 2}} \Vect(B^\bullet)$, which concludes the proof.
\end{proof}

\begin{rem}\label{Remark:Henkel's thesis}
By the proof of Theorem \ref{Theorem:descent for perfectoid} and the argument in the proof of Corollary \ref{Corollary:finite projective modules},
we also have an equivalence of categories
$\Vect(W_n(A^\flat)) \overset{\sim}{\to} \DD(W_n((B^\bullet)^\flat))$
for every integer $n \geq 1$, where $W_n(A^\flat):=W(A^\flat)/p^n$.
This implies that \cite[Conjecture A]{Henkel} holds.

In \cite[Section 4.3.3]{Henkel},
Henkel proved some $p$-complete $\arc$-descent results for
$\mathrm{BK}_n$-modules for perfectoid rings assuming this conjecture.
Here $\mathrm{BK}_n$-modules are ``truncated'' analogues of (minuscule) Breuil--Kisin--Fargues modules; see \cite[Chapter 2]{Henkel} for details.
For example, he proved that \cite[Conjecture A]{Henkel} implies that
the functor sending a perfectoid $R$-algebra $A$ to the groupoid $\mathrm{BK}_n(A)$ of $\mathrm{BK}_n$-modules for $A$ is a stack with respect to the $p$-complete $\arc$-topology; see \cite[Theorem 4.3.15]{Henkel}.
\end{rem}

\section{The classification of $p$-divisible groups over perfectoid rings}\label{Section:The classification of p-divisible groups over perfectoid rings}

In this section, we discuss the classification of $p$-divisible groups over perfectoid rings (Theorem \ref{Theorem:Classifition p-divisible group Intro}).
We follow the approach of Scholze--Weinstein \cite[Theorem 17.5.2]{Scholze-Weinstein}.

Let $A$ be a perfectoid ring.
Let $\varphi$ be the Frobenius automorphism of $W(A^\flat)$.
For a generator $\xi \in \Ker \theta$, we put $\widetilde{\xi}:=\varphi(\xi)$.
Recall that a \textit{minuscule Breuil--Kisin--Fargues module} for $A$ is a finite projective module $M$ over $W(A^\flat)$ with a $W(A^\flat)$-linear map
$
F_M \colon \varphi^*M \to M
$
such that the cokernel of $F_M$ is killed by $\widetilde{\xi}$.
Note that, since $\widetilde{\xi}$ is a non-zero divisor, the condition on $F_M$ is equivalent to the existence of a $W(A^\flat)$-linear map
$
V_M \colon M \to \varphi^*M
$
such that $F_M \circ V_M= \widetilde{\xi}$.
Moreover, for a fixed $\xi$,
such a map $V_M$ is uniquely determined.

We begin by recalling the following special case:

\begin{thm}\label{Theorem:classification over valuation ring}
Let $A$ be a perfectoid ring.
If $A$ satisfies one of the following conditions, then
there exists an anti-equivalence $\mathcal{M}$ from the category of $p$-divisible groups over $A$ to the category of minuscule Breuil--Kisin--Fargues modules for $A$.
\begin{enumerate}
    \item (Berthelot, Gabber, Lau) $A$ is a perfect ring over $\F_p$. 
    \item (Fargues, Scholze--Weinstein) $A$ is the ring of integers $\O_C$ of an algebraically closed non-archimedean extension $C$ of $\Q_p$.
\end{enumerate}
\end{thm}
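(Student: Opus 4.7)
The plan is to derive both statements from results already present in the literature, with original work limited to verifying that the formulation in terms of minuscule Breuil--Kisin--Fargues modules matches the data actually produced in each reference.

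For part (1), when $A$ is a perfect $\F_p$-algebra we have $A^\flat=A$ and $W(A^\flat)=W(A)$, with $\theta$ the usual reduction modulo $p$; we may choose $\xi=p$, so that $\widetilde{\xi}=\varphi(\xi)=p$ as well. Under these identifications a minuscule Breuil--Kisin--Fargues module for $A$ is precisely a finite projective $W(A)$-module $M$ equipped with a $\varphi$-semilinear map $F\colon M\to M$ and a $\varphi^{-1}$-semilinear map $V\colon M\to M$ satisfying $FV=VF=p$, that is, a classical (contravariant) Dieudonn\'e module over $A$. The required anti-equivalence $\mathcal{M}$ is then the contravariant crystalline Dieudonn\'e functor: this is due to Dieudonn\'e and Berthelot over perfect fields, was extended to all perfect $\F_p$-algebras by Gabber, and is recorded in the form needed here in \cite[Theorem~9.8]{Lau2018}.

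For part (2), the result is a reformulation of \cite[Theorem~14.4.1]{Scholze-Weinstein}. The functor $\mathcal{M}$ is constructed by evaluating the contravariant Dieudonn\'e crystal of $G$ on the PD-thickening $W(\O_C^\flat)\twoheadrightarrow \O_C/p$ and equipping the result with the Frobenius coming from crystalline functoriality; the condition that $\widetilde{\xi}$ kill $\Coker F_M$ encodes the Hodge filtration of $G$. Essential surjectivity and full faithfulness are reduced in \emph{loc.\ cit.} to the classification of $p$-divisible groups over $\O_C$ by their universal covers and Tate modules \cite[Theorem~B]{Scholze-Weinstein2013}, together with the Fargues dictionary between lattices in the rational crystalline module and integral Breuil--Kisin--Fargues structures.

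The only genuine friction in assembling these two parts is conventional: one must pin down covariant versus contravariant Dieudonn\'e theory, the choice of generator $\xi$ versus $\widetilde{\xi}$, and the correct Frobenius twist. All of these compatibilities are treated in the references above, so the theorem follows by direct citation, and the content left to the present paper is exclusively the use of these building blocks in the subsequent \emph{arc}-descent argument.
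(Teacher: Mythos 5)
Your proposal follows essentially the same route as the paper: both cases are handled by direct citation, with $(1)$ attributed to Berthelot and Gabber/Lau and $(2)$ to Scholze--Weinstein \cite[Theorem 14.4.1]{Scholze-Weinstein}, just as in the paper's own proof. The minor differences are in emphasis and in the precise references (you cite \cite[Theorem 9.8]{Lau2018} for part $(1)$, while the paper cites Berthelot's Corollaire 3.4.3 for the valuation-ring case and Lau's 2013 display paper for the general perfect case; both are acceptable entry points).

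One inaccuracy worth flagging is your description of the construction in part $(2)$. You assert that $\mathcal{M}$ is obtained by evaluating the contravariant Dieudonn\'e crystal on the surjection $W(\O_C^\flat)\twoheadrightarrow \O_C/p$, calling this a PD-thickening. It is not: the kernel $(\xi,p)$ of this surjection does not carry a divided power structure on $W(\O_C^\flat)$ — one must pass to $A_{\cris}$, the $p$-complete PD-envelope, before any crystal can be evaluated, and then descend the resulting $A_{\cris}$-module to $W(\O_C^\flat)$, which is a nontrivial extra step. Scholze--Weinstein's actual construction in Theorem 14.4.1 does not proceed this way; it goes through the classification of $p$-divisible groups over $\O_C$ by pairs (Tate module, Hodge--Tate filtration) from \cite[Theorem B]{Scholze-Weinstein2013} and the Fargues equivalence between such pairs and Breuil--Kisin--Fargues modules via modifications of vector bundles on the Fargues--Fontaine curve. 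The paper sidesteps this issue by simply taking the output of their theorem as a black box and specifying that $\mathcal{M}(\mathcal{G})$ should be defined as the $W(\O_C^\flat)$-linear dual of the Breuil--Kisin--Fargues module so produced, which is what makes the functor contravariant and compatible with the crystalline convention used in case $(1)$. Since your conclusion is ultimately ``follows by direct citation,'' the proof still goes through, but the described mechanism for producing the functor in case $(2)$ is not the one the cited theorem provides, and you also do not explicitly record the dualization needed to match the anti-equivalence conventions.
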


\begin{proof}
(1) For a $p$-divisible group $\mathcal{G}$ over $A$,
let $\mathcal{M}(\mathcal{G}):=\mathbb{D}(\mathcal{G})(W(A))$ be the evaluation on the divided power extension $W(A) \to A$ of the contravariant Dieudonn\'e crystal $\mathbb{D}(\mathcal{G})$ defined in \cite[D\'efinition 3.3.6]{BBM}.
This construction induces an anti-equivalence from the category of $p$-divisible groups over $A$ to the category of minuscule Breuil--Kisin--Fargues modules for $A$;
this fact is proved by Berthelot \cite[Corollaire 3.4.3]{Berthelot} (see also \cite[Proposition 4.3.4]{Berthelot}) for a perfect valuation ring over $\F_p$, and it is proved by Gabber and Lau \cite[Theorem 6.4]{Lau} independently for a general perfect ring over $\F_p$.

(2) See \cite[Theorem 14.4.1]{Scholze-Weinstein}, which is based on \cite[Theorem B]{Scholze-Weinstein2013}.
In this paper, for a $p$-divisible group $\mathcal{G}$ over $\O_C$, we define $\mathcal{M}(\mathcal{G})$ to be the $W(\O^\flat_C)$-linear dual of the Breuil--Kisin--Fargues module attached to $\mathcal{G}$ given in \cite[Theorem 14.4.1]{Scholze-Weinstein}.
\end{proof}

We now deduce the general case from Theorem \ref{Theorem:classification over valuation ring} by using Corollary \ref{Corollary:finite projective modules}:

\begin{thm}[{Lau, Scholze--Weinstein}]\label{Theorem:classification over perfectoid ring}
For each perfectoid ring $A$, there exists an anti-equivalence of categories
\[
\mathcal{M}_A \colon \{ p\text{-divisible groups over} \ A  \} \overset{\sim}{\to} \{ \text{minuscule Breuil--Kisin--Fargues modules for} \ A \}
\]
satisfying the following properties:
\begin{itemize}
    \item $\mathcal{M}_A$ is compatible with base change in $A$.
    \item If $p=0$ in $A$ or $A=\O_C$ for an algebraically closed non-archimedean extension $C/\Q_p$, then $\mathcal{M}_A$ coincides with the anti-equivalence given in Theorem \ref{Theorem:classification over valuation ring}.
\end{itemize}
\end{thm}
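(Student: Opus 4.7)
\emph{Proof plan.} I follow the approach of Scholze--Weinstein, using Corollary \ref{Corollary:finite projective modules} to execute the descent step, as indicated in Remark \ref{Remark:Scholze-Weinstein gap}. The strategy is to observe that both sides of the desired anti-equivalence form $\varpi$-complete arc-sheaves on $\mathcal{C}_{R, \varpi}$, and to descend the known equivalences of Theorem \ref{Theorem:classification over valuation ring} along a carefully chosen $\varpi$-complete arc-hypercover of $A$ whose terms fall under Theorem \ref{Theorem:classification over valuation ring}.

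First, I would verify arc-descent for both sides. A minuscule Breuil--Kisin--Fargues module is a finite projective $W(-^\flat)$-module $M$ together with a map $V_M \colon M \to \varphi^* M$ satisfying the closed relation $F_M \circ V_M = \widetilde{\xi}$; Corollary \ref{Corollary:finite projective modules} descends both the module and the morphism, and the relation transports under base change, giving $\varpi$-complete arc-descent on this side. A $p$-divisible group $\mathcal{G}$ is determined by its system $\{ \mathcal{G}[p^n] \}$ of finite locally free group schemes, each represented by a finite projective Hopf algebra; Corollary \ref{Corollary:finite projective modules} again descends the Hopf algebras together with their structure maps (comultiplication, counit, antipode, and the transitions $\mathcal{G}[p^n] \hookrightarrow \mathcal{G}[p^{n+1}]$), so that the $p$-divisible group itself recovers.

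Next, I would construct a $\varpi$-complete arc-hypercover $A \to B^\bullet$ with each $B^n$ a product of algebraically closed perfectoid valuation rings (perfect $\F_p$-algebras or $\mathcal{O}_{C_i}$-type). Choosing $\varpi$ so that $\varpi^p$ is a unit multiple of $p$ (Example \ref{Example:pefect rings case}(2)) and using the tilting equivalence (Proposition \ref{Proposition:tilting and arc cover}), it suffices to arrange this after tilting: take an arc-cover of $A^\flat$ by the $\varpi^\flat$-completion of a product of algebraically closed perfect valuation rings (one per arc-point), and untilt via $B := W(S)/\xi$; iterating on the completed tensor powers yields the hypercover. On each term, Theorem \ref{Theorem:classification over valuation ring} supplies the anti-equivalence factor-by-factor---part (1) for characteristic-$p$ factors, part (2) for $\mathcal{O}_{C_i}$-factors---and these assemble to $\mathcal{M}_{B^n}$. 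Arc-descent then produces $\mathcal{M}_A$, and the functoriality of the construction yields the base-change compatibility and the agreement with Theorem \ref{Theorem:classification over valuation ring} in the two special cases.

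The principal obstacle is that Theorem \ref{Theorem:classification over valuation ring}(2) classifies $p$-divisible groups over a single $\mathcal{O}_C$, whereas the terms $B^n$ in the hypercover are products of such rings. I would resolve this by performing a further arc-refinement separating a product into its individual factors---over which Theorem \ref{Theorem:classification over valuation ring} applies literally---and descending back via Corollary \ref{Corollary:finite projective modules}. Because the resulting refinement is in general a hypercover rather than a \v{C}ech conerve, the hyperdescent strengthening of Corollary \ref{Corollary:finite projective modules} (as opposed to ordinary arc-descent) is exactly what makes the argument go through, and this is precisely the point highlighted in Remark \ref{Remark:Scholze-Weinstein gap}.
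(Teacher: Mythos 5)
Your overall strategy matches the paper's: both sides are shown to satisfy $\varpi$-complete arc-descent via Corollary \ref{Corollary:finite projective modules}, one takes a $\varpi$-complete arc-hypercover whose terms are products of algebraically closed perfectoid valuation rings (the paper gets this directly from \cite[Lemma 2.2.3]{CS}), and one glues the anti-equivalences from Theorem \ref{Theorem:classification over valuation ring}. Your sketch of why $p$-divisible groups descend is also the right idea; the paper makes it precise by citing \cite[Proposition 1.1]{deJong}.

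However, there is a genuine gap in how you propose to handle the product terms. The terms $B^n$ of the hypercover are in general \emph{infinite} products $\prod_{i \in I} V_i$ of perfectoid valuation rings, and Theorem \ref{Theorem:classification over valuation ring} is stated only for a single such valuation ring. You propose to resolve this by a ``further arc-refinement separating a product into its individual factors,'' but no such refinement exists when $I$ is infinite: $\Spec \prod_i V_i$ is not $\bigsqcup_i \Spec V_i$, and any $\varpi$-complete arc-cover of $\prod_i V_i$ in $\mathcal{C}_{R,\varpi}$ whose terms are products of valuation rings produces a ring of the very same type, so this manoeuvre only goes in circles. The paper instead makes an explicit intermediate step (following \cite[Theorem 17.5.2]{Scholze-Weinstein}): it first \emph{directly} extends Theorem \ref{Theorem:classification over valuation ring} to rings of the form $\prod_i V_i$, which is not a formal consequence of descent but rather an honest argument about how $p$-divisible groups and minuscule Breuil--Kisin--Fargues modules behave over infinite products. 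Without this intermediate extension, the anti-equivalences $\mathcal{M}_{B^n}$ are not defined and the descent step cannot start. (As a smaller point: your invocation of hyperdescent as coming from ``the resulting refinement'' misidentifies its role; hypercovers appear because \cite[Lemma 2.2.3]{CS} naturally produces a hypercover at the outset, not because of any post-hoc refinement.)
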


\begin{proof}
As in the proof of \cite[Theorem 17.5.2]{Scholze-Weinstein},
Theorem \ref{Theorem:classification over valuation ring} implies that, for each perfectoid ring $A=\prod_{i}V_i$ which is a product of perfectoid valuation rings $V_i$ of rank $\leq 1$ with algebraically closed fraction fields,
we have an anti-equivalence $\mathcal{M}_A$ satisfying the above conditions.

Let $A$ be a general perfectoid ring.
By \cite[Lemma 2.2.3]{CS} (and Example \ref{Example:pefect rings case}), there exists a $p$-complete $\arc$-hypercover $A \to B^\bullet$ whose terms $B^n$ are products of perfectoid valuation rings of rank $\leq 1$ with algebraically closed fraction fields.
By Corollary \ref{Corollary:finite projective modules} and \cite[Proposition 1.1]{deJong},
the category of $p$-divisible groups over $A$ is equivalent to the category of $p$-divisible groups over $B^0$ with descent data (defined in the same way as the category $\DD(B^\bullet)$ in Section \ref{Subsection:Descent for finite projective modules}).
Applying Corollary \ref{Corollary:finite projective modules} again, we see that the same statement holds for minuscule Breuil--Kisin--Fargues modules.
Thus we can obtain an anti-equivalence $\mathcal{M}_A$
from the category of $p$-divisible groups over $A$ to the category of minuscule Breuil--Kisin--Fargues modules for $A$
by using these equivalences and anti-equivalences $\mathcal{M}_{B^n}$.
One can check that $\mathcal{M}_A$ does not depend (up to canonical equivalence) on the choice of $A \to B^\bullet$, and $\mathcal{M}_A$ is compatible with base change in $A$.
We also note that, if $p=0$ in $A$, then $\mathcal{M}_A$ coincides with the anti-equivalence given in Theorem \ref{Theorem:classification over valuation ring}
since the formation of $\mathbb{D}(\mathcal{G})(W(A))$ is compatible with base change in $A$.

The proof of Theorem \ref{Theorem:classification over perfectoid ring} is complete.
\end{proof}

\begin{rem}\label{Remark: prismatic Dieudonne}
The conditions in Theorem \ref{Theorem:classification over perfectoid ring} determine the anti-equivalences
$\mathcal{M}_A$ uniquely (up to canonical equivalences).
Ansch\"utz--Le Bras give a cohomological description of $\mathcal{M}_A$
using the prismatic site of a perfectoid ring $A$ developed by Bhatt--Scholze; see \cite{AL} for details.
\end{rem}

\subsection*{Acknowledgements}
The author would like to thank K\k{e}stutis \v{C}esnavi\v{c}ius, Tetsushi Ito, Shane Kelly, Teruhisa Koshikawa, Arthur-C\'esar Le Bras, Zhouhang Mao, Akhil Mathew, Peter Scholze, Zijian Yao, and Yifei Zhao for helpful discussions and comments.
Also, the author would like to thank Bhargav Bhatt for explaining the second proof of Theorem \ref{Theorem:descent for perfect complex for derived quotients} given in Section \ref{Subsection:The second proof}.
Finally, the author would like to thank the referee for carefully reading the manuscript and for constructive comments.
The work of the author was supported by the European Research Council (ERC) under the European Union's Horizon 2020 research and innovation programme (grant agreement No.\ 851146).

\end{document}